\title[Noncommutative rational functions]{Noncommutative rational functions,
their difference-differential calculus and realizations}
\author[D.~S.~Kaliuzhnyi-Verbovetskyi]{Dmitry S.
Kaliuzhnyi-Verbovetskyi}
\address{Department of Mathematics \\
Drexel University\\
3141 Chestnut Str.\\
 Philadelphia, PA, 19104}
\email{dmitryk@math.drexel.edu}
\author[V. Vinnikov]{Victor Vinnikov}
\address{Department of Mathematics \\
Ben-Gurion University of the Negev\\
 Beer-Sheva, Israel, 84105}
\email{vinnikov@math.bgu.ac.il}\thanks {Part of the research
described in this paper was carried during the first author's
visit to Ben-Gurion University in December 2009 that was partially
supported by the Center for Advanced Studies in Mathematics. The
revised version was prepared during the stay of the authors in May
2010 at the Mathematisches Forschungsinstitut Oberwolfach under
the program Research in Pairs. The first author was also supported
by the NSF grant DMS 0901628. The research of the second author
was partially supported by the Israel Science Foundation.}
\date{}
\newcommand{\ring}{\ensuremath{\mathcal{R}}}
\newcommand{\field}{\ensuremath{\mathbb{K}}}
\newcommand{\skfield}{\ensuremath{\mathbf{K}}}
\newcommand{\tfield}{\ensuremath{\widetilde{\mathbb{K}}}}
\newcommand{\plangle}{\moverlay{(\cr<}}
\newcommand{\prangle}{\moverlay{)\cr>}}
\newcommand{\tZ}{\ensuremath{\widetilde{Z}}}
\newcommand{\mat}[2]{\ensuremath{{#1}^{#2\times #2}}}
\newcommand{\mattuple}[3]{\ensuremath{\left({#1}^{#2\times #2}\right)^{#3}}}
\newcommand{\rmat}[3]{\ensuremath{{#1}}^{#2\times #3}}
\newcommand{\rmattuple}[4]{\ensuremath{\left({#1}^{#2\times #3}\right)^{#4}}}
\newcommand{\free}{\mathcal{F}}
\newcommand{\rs}{\mathcal{R}}
\newcommand{\ls}{\mathcal{L}}
\newcommand{\mtrans}{\boldsymbol{\top}}
\newcommand{\col}{\operatornamewithlimits{col}}
\newcommand{\row}{\operatornamewithlimits{row}}
\newcommand{\diag}{\operatornamewithlimits{diag}}
\theoremstyle{plain}
\newtheorem{thm}{Theorem}[section]
\newtheorem{cor}[thm]{Corollary}
\newtheorem{prop}[thm]{Proposition}
\theoremstyle{definition}
\newtheorem{dfn}[thm]{Definition}
\theoremstyle{remark}
\newtheorem{rem}[thm]{Remark}
\newtheorem{ex}[thm]{Example}
\newcommand{\spn}{\operatornamewithlimits{span}}
\newcommand{\ran}{\operatorname{ran}}
\numberwithin{equation}{section}
\newcommand{\emptyword}{\ensuremath{\boldsymbol{\emptyset}}}
\newcommand{\dom}{\operatorname{dom}}
\newcommand{\edom}{\operatorname{edom}}
\def\moverlay{\mathpalette\mov@rlay}
\def\mov@rlay#1#2{\leavevmode\vtop{%
    \baselineskip\z@skip \lineskiplimit-\maxdimen
    \ialign{\hfil$#1##$\hfil\cr#2\crcr}}}
\begin{document}

\maketitle

\begin{abstract}
Noncommutative rational functions appeared in many contexts in
system theory and control, from the theory of finite automata and
formal languages to robust control and LMIs. We survey the
construction of noncommutative rational functions, their
realization theory and some of their applications. We also develop
a difference-differential calculus as a tool for further analysis.
\end{abstract}

\section{Introduction} \label{s:intro}

Noncommutative rational functions first appeared in system theory
in the context of  recognizable formal power series in
noncommuting indeterminates in the theory of formal languages and
finite automata; see Kleene \cite{Kle}, Sch\"{u}tzenberger
\cite{Schutz61,Schutz62b}, and Fliess
\cite{Fliess70,Fliess74a,Fliess74b} (where the motivation comes
also from applications to certain classes of nonlinear systems),
and Berstel--Reutenauer \cite{BR} for a survey. In particular,
noncommutative rational functions admit a good state space
realization theory. More recently, state space realizations of
rational expressions in Hilbert space operators (modelling
structured possibly time varying uncertainty) have figured
prominently in work on robust control of linear systems, see Beck
\cite{Beck}, Beck--Doyle--Glover \cite{BeckDoyleGlover},
Lu--Zhou--Doyle \cite{LuZhouDoyle}.

Another important application comes from the area of Linear Matrix
Inequalities (LMIs); see, e.g., Nesterov--Nemirovski \cite{NN},
Nemirovski \cite{N06}, Skelton--Iwasaki--Grigoriadis \cite{SIG97}.
As it turns out, most optimization problems appearing in systems
and control are dimension-independent, i.e., the natural variables
are matrices, and the problem involves rational expressions in
these matrix variables which have therefore the same form
independent of matrix sizes; see Helton \cite{H03},
Helton--McCullough--Putinar--Vinnikov \cite{HMcCPV}. Realizations
of rational functions in noncommuting indeterminates are exactly
what is needed here to convert (numerically unmanageable) rational
matrix inequalities into (highly manageable) linear matrix
inequalities, see Helton--McCullough--Vinnikov \cite{HMcCV}.

Last but not least, in many situations one can establish a
commutative result by ``lifting'' to the noncommutative setting,
applying the noncommutative theory, and then ``descending'' again
to the commutative situation. Some examples are:
\begin{itemize}
    \item The classical paper of
Fornasini--Marchesini \cite{FM} establishing a state space
realization theorem for rational functions of several commuting
variables.
    \item The results of Ball--Kaliuzhnyi-Verbovetskyi \cite{BK-V}
    on conservative dilations of various classes  of (commutative) multidimensional
    systems.
    \item The theorem of Kaliuzhnyi-Verbovetskyi--Vinnikov \cite{KVV} showing that
the singularities of a matrix-valued rational function of several
commuting variables which is regular at zero coincide with the
singularities of the resolvent in any of its Fornasini--Marchesini
realizations with the minimal possible state space dimension. This
implies, in particular, the absence of zero-pole cancellations in
a minimal factorization.

\end{itemize}

The goal of this paper is two-fold. First, we survey the basic
concepts of the theory of noncommutative rational functions, and
their realization theory. Second, we develop a
difference-differential calculus for noncommutative rational
functions. This is a new powerful tool for the needs of system
theory and beyond. It is a special instance of the general theory
of noncommutative functions which are defined as functions on
tuples of matrices of all sizes satisfying certain compatibility
conditions as we vary the size of matrices (they respect direct
sums and simultaneous similarities); see
Kaliuzhnyi-Verbovetskyi--Vinnikov \cite{ncfound}.

It is important to notice that the NCAlgebra software,
\begin{center}
http://www.math.ucsd.edu/$\sim$ncalg,
\end{center}
 implements many symbolic algorithms in the noncommutative
setting; see \cite{HMcCPV} for examples, guidance, and detailed
references.

\section{Noncommutative rational functions} \label{s:ncfun}

We first formally introduce noncommutative rational functions;
this involves some non-trivial details since
unlike the commutative case, a noncommutative rational function
does not admit a canonical coprime
fraction representation. We follow
Kaliuzhnyi-Verbovetskyi--Vinnikov \cite{KVV}, to which we
refer for both details and references to extensive algebraic
literature; we only mention Amitsur \cite{Am66},
Bergmann \cite{Be70}, Cohn \cite{Co71a,Co72} for some of the
original constructions, and
Rowen \cite[Chapter 8]{Row80}, Cohn \cite{Co71,Co06} for good
expositions.

We start with noncommutative polynomials in $d$ noncommuting
indeterminates $z_1,\ldots,z_d$ over a field  ${\field}$. E.g., a
noncommutative polynomial of total degree 2 in 2 indeterminates
$z_1,z_2$ is of the form
\begin{displaymath}
p=\alpha + \beta z_1 + \gamma z_2 + \delta z_1^2 + \epsilon z_1
z_2 + \zeta z_2 z_1 + \eta z_2^2,
\end{displaymath}
where the coefficients
$\alpha,\beta,\gamma,\delta,\epsilon,\zeta,\eta \in {\field}$. The
general form of a noncommutative polynomial is
$$p=\sum_{w\in\free_d}p_wz^w.$$
Here $\free_d$ denotes the free semigroup with $d$ generators
(letters) $g_1$, \ldots, $g_d$; elements of $\free_d$ are
arbitrary words $w=g_{i_\ell}\cdots g_{i_1}$ and the semigroup
operation is concatenation; the neutral element is the empty word
$\emptyword$, and $|w|=\ell$ is the length of the word $w$. We use
noncommutative multipowers $z^w=z_{i_\ell}\cdots z_{i_1}$. In the
example above, $$p_{\emptyword}=\alpha,\ p_{g_1}=\beta,\
p_{g_2}=\gamma,\ p_{g_1^2}=\delta,\ p_{g_1g_2}=\epsilon,\
p_{g_2g_1}=\zeta,\ p_{g_2^2}=\eta.$$

Noncommutative polynomials form an algebra ${\field}\langle
z_1,\ldots,z_d \rangle$ over ${\field}$, often called the free
associative algebra on $d$ generators $z_1,\ldots,z_d$. Notice
that we can evaluate a noncommutative polynomial $p \in
{\field}\langle z_1,\ldots,z_d \rangle$ on a $d$-tuple
$Z=(Z_1,\ldots,Z_d)$ of $n \times n$ matrices over ${\field}$, for
any $n$, yielding a $n \times n$ matrix $p(Z)$.

A non-zero polynomial can vanish on tuples of matrices of a
certain size. E.g., $p=z_1 z_2 - z_2 z_1$ vanishes on pairs of $1
\times 1$ matrices (scalars), and
$$p=\sum_{\pi\in
\mathcal{S}_{n+1}}\operatorname{sign}(\pi)\,x_1^{\pi(1)-1}x_2\cdots
x_1^{\pi(n+1)-1}x_2$$ vanishes on pairs of $n\times n$ matrices
(here  $\mathcal{S}_{n+1}$ is the symmetric group on $n+1$
elements); see \cite[Proposition 1.1.37 and Exercise 1.4.11 on
page 104]{Row80} and \cite[Theorem 7]{Form}. However, if $p(Z)=0$
for all $d$-tuples $Z$ of square matrices of all sizes, then
necessarily $p$ is the zero polynomial. More precisely, if
$p(Z)=0$ for all $d$-tuples $Z$ of $n \times n$ matrices, then
$\deg p \geq 2n$; this follows by applying to $p$ a
multilinearization process to reduce to the case of a polynomial
of degree $1$ in each indeterminate, and then evaluating the
resulting multilinear polynomial on a staircase of matrix units,
see \cite[page 6 and Lemma 1.4.3]{Row80}. We notice that a much
stronger statement appears in \cite{AK1}; in particular, the
intersection of the kernels of the matrix evaluations $p(Z)$ for
all $d$-tuples of $n\times n$ matrices is zero for $n$ large
enough compared to the degree of $p$.

We next define (scalar) noncommutative rational expressions by
starting with noncommutative polynomials and then applying
successive arithmetic operations --- addition, multiplication, and
inversion. We emphasize that an expression includes the order in
which it is composed and no two distinct expressions are
identified, e.g., $(z_1)+(-z_1)$, $(-1)+(((z_1)^{-1})(z_1))$, and
$0$ are different noncommutative rational expressions. A
noncommutative rational expression $r$ can be evaluated on a
$d$-tuple $Z$ of $n \times n$ matrices in its {\em domain of
regularity}, $\dom{r}$, which is defined as the set of all
$d$-tuples of square matrices of all sizes such that all the
inverses involved in the calculation of $r(Z)$ exist. E.g., if
$r=(z_1 z_2 - z_2 z_1)^{-1}$ then $\dom{r} = \{ Z=(Z_1,Z_2):\ \det
(Z_1Z_2-Z_2Z_1)\neq 0\}$. We assume that $\dom{r} \neq \emptyset$,
in other words, when forming noncommutative rational expressions
we never invert an expression that is nowhere invertible.

Two noncommutative rational expressions $r_1$ and $r_2$ are called
{\em equivalent} if $\dom{r_1} \cap \dom{r_2} \neq
\emptyset$\footnote{This requirement is in fact superfluous. For
any noncommutative rational expression $r$, it turns out
\cite[Remark 2.3]{KVV} that the evaluation of $r$ on $d$-tuples of
$n\times n$ generic matrices
--- see the discussion following Proposition \ref{prop:nc-invert}
below --- is defined for all sufficiently large $n$. In the case
where $\field$ is an infinite field it follows that $\dom_nr$ is
Zariski dense for all sufficiently large $n$; therefore for any
two noncommutative rational expressions $r_1$ and $r_2$,
$\dom{r_1} \cap \dom{r_2} \neq \emptyset$. The case where the
field $\field$ is finite can be handled as in the proof of
Proposition \ref{prop:nc-invert}.} \label{page:reason} and $r_1(Z)
= r_2(Z)$ for all $d$-tuples $Z \in \dom{r_1} \cap \dom{r_2}$.
E.g., the three different noncommuting rational expressions in the
paragraph above are equivalent. For another example, easy matrix
algebra shows that $r_1=z_1 z_2 (z_1z_2 - z_2 z_1)^{-1}$ and
$r_2=1+z_2z_1(z_1z_2 - z_2z_1)^{-1}$ are equivalent.

We define a {\em noncommutative rational function} to be an
equivalence class of noncommutative rational expressions. We
usually denote noncommutative rational functions by German
(Fraktur) letters.

Notice that, unlike in the commutative case, the ``minimal
complexity'' of a noncommutative rational expression defining a
given noncommutative rational function can be arbitrarily high;
there is nothing similar to a coprime fraction representation.

It turns out that any nonzero noncommutative rational function is
invertible. This follows from the following result which is
essentially well known and is non-trivial already in the case
where $r$ in the statement is a noncommutative polynomial.
\begin{prop}\label{prop:nc-invert}
If $r$ is a noncommutative rational expression and $\det r(Z)=0$
for all $Z\in\dom{r}$ then $r$ is equivalent to zero.
\end{prop}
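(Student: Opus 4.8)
The plan is to prove the contrapositive: if a noncommutative rational expression $r$ is not equivalent to zero, then there exists some $Z \in \dom r$ with $\det r(Z) \neq 0$. The key idea is to work with generic matrices. Let me introduce commuting indeterminates $\zeta^{(k)}_{ij}$ for $k = 1, \ldots, d$ and $1 \le i, j \le n$, and form the generic $n \times n$ matrices $X_k = (\zeta^{(k)}_{ij})$ whose entries are independent commuting variables. Let $\field_n$ denote the rational function field $\field\bigl(\zeta^{(k)}_{ij}\bigr)$ generated by all these entries over $\field$. The first step is to argue that evaluating $r$ on the tuple $X = (X_1, \ldots, X_d)$ of generic matrices makes sense for all sufficiently large $n$: every inversion occurring in the formation of $r$ produces a matrix over $\field_n$ whose determinant is a nonzero element of $\field_n$ (because by induction none of the intermediate expressions is the zero matrix over $\field_n$ once $n$ is large enough), so the inverse exists inside the matrix algebra $\mat{\field_n}{}$. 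This is precisely the content alluded to in the footnote referencing \cite[Remark 2.3]{KVV}.

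The second step is to relate the generic evaluation $r(X)$ to the honest matrix evaluations $r(Z)$ for $Z$ a $d$-tuple of $n \times n$ matrices over $\field$. The entries of the matrix $r(X) \in \mat{\field_n}{}$ are rational functions in the variables $\zeta^{(k)}_{ij}$; specializing the variables to scalar values in $\field$ corresponds exactly to evaluating $r$ on the specific tuple $Z$ obtained from those values, provided the specialization avoids the (finitely many) denominators appearing in the entries of $r(X)$ and in the determinants of the inverted blocks. Thus $\det r(X) \in \field_n$ is a well-defined rational function, and for any scalar specialization avoiding its denominator and all the intermediate denominators, we get a genuine point $Z \in \dom r$ with $\det r(Z)$ equal to the corresponding specialization of $\det r(X)$.

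The third step is to show that $r$ not being equivalent to zero forces $r(X) \neq 0$ as a matrix over $\field_n$, and hence forces $\det r(X) \neq 0$ in $\field_n$, at least for $n$ large. If $r(X)$ were the zero matrix over $\field_n$ for all large $n$, then every scalar evaluation $r(Z)$ would vanish, making $r$ equivalent to zero; so by contraposition $r(X) \ne 0$ for some large $n$. The crucial point, however, is strengthening ``$r(X) \neq 0$'' to ``$\det r(X) \neq 0$'': a nonzero matrix need not be invertible. Here I would invoke the hypothesis. We are assuming $\det r(Z) = 0$ for \emph{all} $Z \in \dom r$; if $\field$ is infinite, the set of valid scalar specializations is Zariski dense, so the polynomial identity $\det r(Z) = 0$ on a dense set forces $\det r(X) = 0$ identically in $\field_n$, which combined with the previous analysis should be pushed to show $r$ is equivalent to zero — the argument is cleanest when run as the contrapositive, concluding $\det r(X)\neq 0$ generically and then specializing to produce a point where the determinant is nonzero.

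The main obstacle is the finite-field case, where Zariski density of $\dom_n r$ fails and one cannot directly pass between identities over $\field_n$ and vanishing on all scalar tuples. The standard remedy is to pass to an infinite extension field: replace $\field$ by $\field(t)$ or by its algebraic closure $\overline{\field}$, observe that a noncommutative rational expression over $\field$ is also one over the larger field with the same combinatorial structure, prove the statement there, and then descend. Because the determinant $\det r(X)$ is a rational function with coefficients in $\field$, its vanishing or non-vanishing is unaffected by enlarging the field, and a scalar point witnessing $\det r(Z) \neq 0$ over the extension can be taken to have coordinates in a finite extension of $\field$, which suffices to contradict the hypothesis after a further descent argument. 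Managing this field-extension bookkeeping carefully, and verifying that the inductive construction of $r(X)$ genuinely avoids degeneracies for all large $n$, are the two places where real care is needed.
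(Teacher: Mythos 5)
There is a genuine gap, and it sits exactly at the point you yourself flag: the passage from ``$r(X)\neq 0$ as a matrix over $\field_n$'' to ``$\det r(X)\neq 0$ in $\field_n$''. Your proposed resolution --- ``invoke the hypothesis \ldots{} which combined with the previous analysis should be pushed to show $r$ is equivalent to zero'' --- is circular: the hypothesis plus Zariski density yields $\det r(X)=0$, but to conclude from this that $r$ is equivalent to zero you need precisely the implication $\det r(X)=0\Rightarrow r(X)=0$, which is the same missing step in contrapositive form. The paper closes this with a substantial algebraic input that your proposal has no substitute for: $r(X)$ lies in $D_n$, the ring of central quotients of the ring $G_n$ of generic matrices, and by Amitsur's theorem \cite[Theorem 3.2.6]{Row80} $D_n$ is a skew field; since (for $d>1$) central elements of $G_n$ are central in all of $\mat{\field}{n}[(T_i)_{jk}]$, hence scalar and invertible over the field of rational functions, $D_n$ embeds in $\mat{\field}{n}\big((T_i)_{jk}\big)$, so $\det r_n=0$ makes $r_n$ non-invertible there, which in a skew field forces $r_n=0$. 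The same gap already infects your first step: you justify each inversion in the construction of $r(X)$ by saying the intermediate expression is not the zero matrix, but a nonzero matrix over the commutative field $\field_n$ need not be invertible --- this is the whole difficulty, nontrivial (as the paper notes) already when $r$ is a polynomial. (Well-definedness of $r(X)$ for $n$ with $\dom_nr\neq\emptyset$ can instead be obtained by specializing each inverted subexpression at a point of $\dom_nr$, where its determinant is nonzero by definition of the domain; but that correct route is not the one you give, and it does not repair the main gap.)

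On the finite-field case your strategy (pass to an extension, find a witness, descend) is the right shape and matches the paper's, but the descent needs a mechanism you do not supply: coefficients lying in $\field$ does not produce a witness with \emph{entries} in $\field$. The paper's device is to write $\tfield=\field(\alpha)$ with $\alpha$ a primitive element, embed $\tfield$ into $\mat{\field}{m}$ via the regular representation $\phi(\alpha)=A$, extend to $\phi_n\colon\mat{\tfield}{n}\to\mat{\field}{nm}$, and use the commuting-block determinant identity $\det\phi(\det\widetilde{X})=\det\phi_n(\widetilde{X})$ to convert a tuple $\tZ$ over $\tfield$ with $\det r(\tZ)\neq 0$ into a tuple of $nm\times nm$ matrices over $\field$ lying in $\dom r$ with $\det r\neq 0$, contradicting the hypothesis. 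Without that embedding argument (or an equivalent), your ``further descent argument'' remains a placeholder rather than a proof.
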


Proposition \ref{prop:nc-invert} means that noncommutative
rational functions form a skew field
--- a skew field of fractions of the ring of noncommutative
polynomials. We remind the reader that a skew field, also called a
division ring, is a ring with identity in which every nonzero
element has a multiplicative inverse; it is therefore similar to a
field, except that the multiplication is not assumed to be
commutative. A skew field of fractions of a given ring is a skew
field containing the ring and generated by it in the sense that no
proper skew subfield contains the ring. If a noncommutative
integral domain $\ring$ satisfies the so called right Ore
condition,
\begin{equation}\label{eq:Ore}
\forall\, a, b\in\ring, b \neq 0\ \exists\, c, d\in\ring, d \neq
0\colon a d = b c,
\end{equation}
then one can construct a skew field of fractions analogously to
the commutative case as the ring of right quotients, i.e., of
formal fractions  $c d^{-1}$, $d \neq 0$.  In general, a skew
field of fractions of a noncommutative integral domain might or
might not exist. We refer to \cite[Sections 0.5--0.6 and Chapter
7]{Co71}, \cite{Co06}, \cite[Chapter 4]{Lamb}, \cite[Section 1.7
and pages 105, 107--108]{Row80} for more material on skew fields
of fractions and noncommutative localization. If a skew field of
fractions exists, it might not be unique; we will discuss this in
more detail later in this section.

Before proceeding to the proof of Proposition
\ref{prop:nc-invert}, we notice that for  a given matrix size $n$,
we can view a noncommutative polynomial or a noncommutative
rational expression in $d$ noncommuting indeterminates as a
$n\times n$ matrix-valued polynomial or rational function in
commuting matrix entries. More formally, let $T=(T_1,\ldots,T_d)$
be a $d$-tuple of $n\times n$ matrices whose entries $(T_i)_{jk}$
are $dn^2$ commuting indeterminates; $T_1$, \ldots, $T_d$ are
often called generic matrices. For a noncommutative polynomial $p$
or for a noncommutative rational expression $r$, we define
$$p_n=p(T)\in\mat{\field}{n}\big[(T_i)_{jk} \colon {i=1,\ldots,d;\
j,k=1,\ldots,n}\big]$$ and
$$r_n=r(T)\in\mat{\field}{n}\big((T_i)_{jk} \colon {i=1,\ldots,d;\
j,k=1,\ldots,n}\big).$$ Note that $r_n$ is defined only for $n$ in
a subset $\mathcal{N}_r\subseteq\mathbb{N}$ such that all the
inversions involved in the calculation of the rational
matrix-valued function $r(T)$ exist. E.g., if $r=(z_1 z_2 - z_2
z_1)^{-1}$ then $\mathcal{N}_r=\mathbb{N}\setminus\{ 1\}$.

We make two useful remarks. First, if $\field$ is an infinite
field then $n\in\mathcal{N}_r$ if and only if $\dom_nr:=\dom
r\cap\mattuple{\field}{n}{d}$ is nonempty. This may fail in the
case of a finite field $\field$. E.g., if
$\field=\mathbb{Z}/p\mathbb{Z}$ and $r(z_1)=(z_1^p-z_1)^{-1}$ then
$1\in\mathcal{N}_r$, however $\dom_1r=\emptyset$. The problem is,
of course, in that there are nonzero polynomials over $\field$
which vanish identically.

Second, we define the \emph{extended domain of regularity}, $\edom
r$, of a noncommutative rational expression $r$ as follows: for
each matrix size $n$, $\edom_nr:=\edom
r\cap\mattuple{\field}{n}{d}$ is the domain of regularity of the
rational matrix-valued function $r_n$. (We set
$\edom_nr=\emptyset$ if $n\notin\mathcal{N}_r$.) Here the domain
of regularity of a rational matrix-valued function of commuting
indeterminates is defined to be the intersection of the domains of
regularity of its entries; the domain of regularity of a scalar
rational function is the complement of the zero set of its
denominator in the coprime fraction representation. In general,
$\dom r \subsetneq\edom r$. As a silly example, take
$r=z_1z_1^{-1}$. Then $\dom r=\{ Z_1\colon \det Z_1\neq 0\}$,
however $\mathcal{N}_r=\mathbb{N}$, $r_n=I_n$ for each $n$, so
that $\edom r$ consists of all square matrices $Z_1$ over $\field$
of all sizes.  For a more conceptual example, see the end of this
section.

\begin{proof}[Proof of Proposition \ref{prop:nc-invert}]
 We
consider a subring $G_n=\{ p_n\colon p\in\field\langle
z_1,\ldots,z_d\rangle\}$  of $\mat{\field}{n}[(T_i)_{jk}]$, which
is often called the ring of generic matrices. Let $D_n$ be the
ring of central quotients of $G_n$, i.e., the ring of formal
fractions $PQ^{-1}$ with $P,Q\in G_n$ and $Q$ a regular central
element (see \cite[Section 1.7]{Row80} for details). By a theorem
of Amitsur \cite[Theorem 3.2.6]{Row80}, $D_n$ is a skew field.
Assuming $d>1$,  any central element $Q\in G_n$ is central in the
whole ring $\mat{\field}{n}[(T_i)_{jk}]$ (see \cite[Exercise 2.4.2
on page 149]{Row80}). Hence, any such nonzero $Q$ is a scalar
$n\times n$ matrix of polynomials in $(T_i)_{jk}$; in particular,
$Q$ is invertible in $\mat{\field}{n}\big((T_i)_{jk}\big)$. It
follows that the skew field $D_n$ is a subring of
$\mat{\field}{n}\big((T_i)_{jk}\big)$ (clearly, this is also true
in the case of $d=1$, where $G_n$ is commutative, every element is
central, and every element is invertible in
$\mat{\field}{n}\big((T_1)_{jk}\big)$). Therefore, $r_n\in D_n$
for any noncommutative rational expression $r$ with
$n\in\mathcal{N}_r$. If  $\det r_n=0$ then $r_n$ is not invertible
in $\mat{\field}{n}\big((T_i)_{jk}\big)$. On the other hand, since
$D_n$ is a skew field, this can happen only if $r_n=0$.

Assume now that $\det r(Z)=0$ for all $Z\in\dom{r}$. We claim that
$\det r_n=0$ for each matrix size $n\in\mathcal{N}_r$; by the
preceding paragraph, this will imply the conclusion of the
proposition: $r(Z)=0$ for all $Z\in\dom_n r$, $n\in\mathcal{N}_r$,
i.e., $r$ is equivalent to zero. If the field $\field$ is infinite
then the assumption $\det r(Z)=0$ for all $Z\in\dom_n r$ is simply
equivalent to $\det r_n=0$ (as a rational function of
$(T_i)_{jk}$).

Let now $\field$ be a finite field, and assume that $\det r_n\neq
0$. Then there exists a finite extension ${\tfield}$ of $\field$,
say of degree $m$, and $\tZ\in \mattuple{\tfield}{n}{d}$ such that
$\tZ\in\dom r$ (over $\tfield$) and $\det r(\tZ)\neq 0$. Since
$\field$ is finite, it is perfect, therefore every finite
extension is separable, and thus admits a primitive element
$\alpha$, i.e., $\tfield=\field(\alpha)$ (see \cite[Theorem 7.6.1
and Corollary 7.7.8]{Lang}). Then $1$, $\alpha$, \ldots,
$\alpha^{m-1}$ is a basis for $\tfield$ over $\field$, and we can
define
 a ring homomorphism
$\phi \colon {\tfield} \to \mat{\field}{m}$ by setting
$\phi(\alpha)=A$ where  $A$ is a $m \times m$ matrix over $\field$
whose minimal polynomial coincides with that of $\alpha$. $\phi$
induces a ring homomorphism $\phi_n = {\rm id}_{\mat{\field}{n}}
\otimes_{\field} \phi$ from
$\mat{\tfield}{n}\cong\mat{\field}{n}\otimes_{\field} \tfield$ to
$\mat{\field}{n}\otimes_{\field}\mat{\field}{m}\cong\mat{\field}{nm}$.
The fact that $\phi_n$ is a homomorphism implies that
$(\phi_n(\tZ_1),\ldots,\phi_n(\tZ_d))\in\dom r$ and
$$r(\phi_n(\tZ_1),\ldots,\phi_n(\tZ_d)) =
\phi_n(r(\tZ_1,\ldots,\tZ_d)).$$ For any
$\widetilde{X}=[\widetilde{x}_{ij}]\in\mat{\tfield}{n}$, we claim
that
$$\det\phi(\det\widetilde{X})=\det\phi_n(\widetilde{X}),$$
which boils down to
$$\det\left(\sum_{\pi\in
S_n}\operatorname{sign}(\pi)\phi(\widetilde{x}_{1\pi(1)})\cdots
\phi(\widetilde{x}_{n\pi(n)})\right)=\det\left[\phi(\widetilde{x}_{ij})\right]_{i,j=1,\ldots,n}.$$
Since the matrices $\phi(\widetilde{x}_{ij})$ commute, the last
formula follows from a well known identity for the determinant of
a block matrix with commuting blocks; see, e.g., \cite{KSW}.
Finally,
\begin{multline*}
\det r\left(\phi_n(\tZ_1),\ldots,\phi_n(\tZ_d)\right)=\det
\phi_n\left(r(\tZ_1,\ldots,\tZ_d)\right)\\ =\det\phi\left(\det
r(\tZ_1,\ldots,\tZ_d)\right).
\end{multline*}
Since $\det r(\tZ)\neq 0$, and $\phi$ is a ring homomorphism whose
domain is a field, $\phi\left(\det r(\tZ)\right)$ is invertible.
Therefore, $$\det
r\left(\phi_n(\tZ_1),\ldots,\phi_n(\tZ_d)\right)=\det\phi\left(\det
r(\tZ)\right)\neq 0,$$ which contradicts the assumption that $\det
r(Z)=0$ for each $Z\in\dom r$.
 \end{proof}

The proof of Proposition \ref{prop:nc-invert} implies two
interesting facts. First, while the ``minimal complexity'' of a
noncommutative rational expression defining a given noncommutative
rational function can be arbitrarily high, its restriction to
$n\times n$ matrices, for every matrix size $n$, is of a simple
form: it is equal to the restriction of $pq^{-1}$, where $p$ and
$q$ are noncommutative polynomials, $q$ being a central polynomial
for $n\times n$ matrices.

Second, we could have defined noncommutative rational expressions
and their equivalence using evaluation on generic matrices or on
matrices over the algebraic closure of $\field$ rather than
evaluation on matrices over $\field$ as we did. However, we would
obtain the same noncommutative rational expressions and the same
equivalence relation.

We denote the skew field of noncommutative rational functions in
$z_1$, \ldots, $z_d$ by $\field\plangle z_1,\ldots,z_d\prangle$;
it is often called the free skew field.

Unlike in the commutative case, skew fields of fractions are in
general not unique. Here is an example of infinitely many
embeddings of $\field\langle z_1,z_2\rangle$ into skew fields (see
\cite{Fish} and \cite[Exercise 7.2.10 on page 258]{Co71}).
Consider the polynomial ring in one indeterminate $\field [t]$
with the endomorphism $\alpha_n$ ($n=2,3,\ldots$) induced by
$\alpha_n(t)=t^n$ and let $\field [t][x;\alpha_n]$ be the skew
polynomial ring over $\field [t]$ determined by $\alpha_n$. The
elements of $\field [t][x;\alpha_n]$ are polynomials over $\field$
in $t$ and $x$ with the indeterminates $t$ and $x$ satisfying the
commutation relation $tx=xt^n$. Like any skew polynomial ring,
$\field [t][x;\alpha_n]$ is a right Ore ring, i.e., satisfies
\eqref{eq:Ore}, and can be embedded into its skew field of right
quotients, $\field (t)(x;\alpha_n)$. The elements of $\field
(t)(x;\alpha_n)$ are rational functions over $\field$ in $t$ and
$x$ with $tx=xt^n$. See \cite[Section 0.8]{Co71} for details on
skew polynomial rings. Let $y=xt$. It turns out that for any
noncommutative polynomial $p$ in two indeterminates, $p(x,y)\neq
0$. This can be verified directly by showing that distinct
noncommutative monomials in $x$ and $y$ yield distinct monomials
of the form $x^kt^\ell$; it can also be deduced from the result of
\cite{Ja}, since the two left ideals in $\field [t][x;\alpha_n]$
generated by $x$ and by $y$ have a trivial intersection. We thus
obtain an embedding $p\mapsto p(x,y)$ of $\field \langle
z_1,z_2\rangle$ into $\field [t][x;\alpha_n]$ and, therefore, into
$\field (t)(x;\alpha_n)$. It is not hard to see that these
embeddings are not isomorphic for distinct values of $n$.

It is instructive to observe that in the embedding $\field\langle
z_1,z_2\rangle\hookrightarrow\field (t)(x;\alpha_n)$ the images
$x$ and $y$ of $z_1$ and $z_2$ satisfy the rational identity
$x^{-1}yx=y(x^{-1}y)^{n-1}$ (since $tx=xt^n$ and $y=xt$, hence
$t=x^{-1}y$). This is in contrast to the free skew field
$\field\plangle z_1,z_2\prangle$, where $z_1$, $z_2$ satisfy no
nontrivial rational identities.

The non-uniqueness issue leads us to the notion of the universal
skew field of fractions. A skew field of fractions $\skfield$ of a
ring $\ring$ is called a {\em universal skew field of fractions}
if for every homomorphism $\phi\colon\ring\to\mathbf{L}$ to a skew
field $\mathbf{L}$ there exists a subring
$\skfield_0\subseteq\skfield$ containing $\ring$ and a
homomorphism $\theta\colon\skfield_0\to\mathbf{L}$ extending
$\phi$ such that the following holds:
\begin{equation}\label{def:ext}
\text{\em for every $x\neq 0$ in $\skfield_0$ its inverse $x^{-1}$
belongs to $\skfield_0$ if and only if $\theta(x)\neq 0$.}
\end{equation}
Furthermore, the extension $\theta$ is unique in the following
local sense. Let $\theta'\colon\skfield_0'\to\mathbf{L}$ be
another extension satisfying \eqref{def:ext}. Then there exists a
subring $\widetilde{\skfield}_0\subseteq\skfield_0\cap\skfield_0'$
containing $\ring$ such that $\theta$ and $\theta'$ agree on
$\widetilde{\skfield}_0$, and the extension
$\widetilde{\theta}\colon\widetilde{K}_0\to\mathbf{L}$ defined by
$\widetilde{\theta}:=\theta|_{\widetilde{\skfield}_0}=\theta'|_{\widetilde{\skfield}_0}$
satisfies \eqref{def:ext}. It is straightforward to see that a
universal skew field of fractions is unique (when it exists) up to
a unique isomorphism.

The following result is essentially the first fundamental theorem
of Amitsur on rational identities.
\begin{prop}\label{prop:univer}
$\field\plangle z_1,\ldots,z_d\prangle$ is the universal skew
field of fractions of the ring $\field\langle
z_1,\ldots,z_d\rangle$.
\end{prop}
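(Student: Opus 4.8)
The plan is to construct, for a given homomorphism $\phi\colon\ring\to\mathbf{L}$ from $\ring=\field\langle z_1,\ldots,z_d\rangle$ into a skew field $\mathbf{L}$, the extension $\theta$ by \emph{evaluating rational expressions at the point} $a=(a_1,\ldots,a_d)$, where $a_i:=\phi(z_i)$. Call a noncommutative rational expression $r$ \emph{regular at} $a$ if every inversion occurring in the formation of $r$ is applied, after the substitution $z_i\mapsto a_i$, to a nonzero and hence invertible element of $\mathbf{L}$; for such $r$ the value $r(a)\in\mathbf{L}$ is well defined. I would let $\skfield_0\subseteq\field\plangle z_1,\ldots,z_d\prangle$ be the set of those noncommutative rational functions admitting a representative regular at $a$, and set $\theta(\mathfrak r):=r(a)$ for such a representative. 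First I would record the routine facts: every polynomial is regular at $a$, so $\ring\subseteq\skfield_0$ and $\theta|_\ring=\phi$; the sum and product of expressions regular at $a$ are again regular at $a$ and $\theta$ respects these operations, so $\skfield_0$ is a subring and $\theta$ a homomorphism.

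Next I would verify the defining property \eqref{def:ext}. If $\mathfrak r\neq0$ lies in $\skfield_0$ with a representative $r$ regular at $a$, then $\theta(\mathfrak r)\neq0$ forces $r^{-1}$ to be regular at $a$ as well (one additional inversion, applied to $r(a)\neq0$), whence $\mathfrak r^{-1}\in\skfield_0$ and $\theta(\mathfrak r^{-1})=\theta(\mathfrak r)^{-1}$; conversely, $\mathfrak r^{-1}\in\skfield_0$ gives $\theta(\mathfrak r)\theta(\mathfrak r^{-1})=\theta(1)=1$, so $\theta(\mathfrak r)\neq0$. For the local uniqueness, given a second extension $\theta'\colon\skfield_0'\to\mathbf{L}$ satisfying \eqref{def:ext}, I would take $\widetilde{\skfield}_0\subseteq\skfield_0\cap\skfield_0'$ to be the subring generated from $\ring$ using only those rational operations in which one inverts exclusively elements whose $\phi$-specialized value is nonzero; on such elements both $\theta$ and $\theta'$ are forced, step by step, to take the evaluation value determined by $\phi$ alone, so they agree and the restriction again satisfies \eqref{def:ext}.

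The crux --- and the only genuinely hard point --- is the well-definedness of $\theta$: I must show that $r(a)$ does not depend on the chosen representative $r$ regular at $a$. Subtracting two representatives, this reduces to the assertion that \emph{if a noncommutative rational expression $r$ is equivalent to zero and regular at $a$, then $r(a)=0$}, i.e.\ that a rational identity valid on tuples of matrices over $\field$ of all sizes persists upon substitution of elements of an arbitrary skew field. This is exactly Amitsur's theorem on rational identities, and it is where the real work lies. When $\mathbf{L}$ is finite dimensional over its center $Z$, I would argue by embedding: $\mathbf{L}\otimes_Z\overline Z\cong\mat{\overline Z}{N}$, so $a$ maps to an $N$-tuple of matrices over the field $\overline Z\supseteq\field$; regularity of $r$ at $a$ carries each inverted element to an invertible matrix, placing this tuple in the domain of regularity of $r_N$ over $\overline Z$, where $r_N=0$ because $r$ is equivalent to zero (as in the remark following Proposition \ref{prop:nc-invert}, equivalence may be tested over any extension field). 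Since the embedding is injective, $r(a)=0$. The remaining, substantially harder, case in which $\mathbf{L}$ is infinite dimensional over its center is the heart of Amitsur's argument: one must show that such a skew field satisfies no rational identities beyond those forced on matrices of all sizes, which is carried out via the generic division algebras $D_n$ of the proof of Proposition \ref{prop:nc-invert} together with a specialization argument. This reduction is precisely the content I would invoke or reproduce from Amitsur's first fundamental theorem, and it is the step I expect to be the main obstacle.
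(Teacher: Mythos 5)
Your proposal is correct and follows essentially the same route as the paper: both define $\theta$ by evaluating representatives at $a=(\phi(z_1),\ldots,\phi(z_d))$, take $\skfield_0$ to be the rational functions admitting a representative regular at $a$, and reduce well-definedness to the fact that an expression equivalent to zero is a rational identity for every skew field over $\field$, which is Amitsur's first fundamental theorem. Your explicit splitting argument for the case where $\mathbf{L}$ is finite dimensional over its center is a harmless embellishment that the paper subsumes by invoking Amitsur directly (via Rowen's ultraproduct construction, which passes from the identity holding in the generic division rings $D_n$ for all $n$ to a skew field infinite dimensional over an infinite center), exactly the reduction you anticipate as the main obstacle.
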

\begin{proof}
 Let $\phi \colon {\field}\langle
z_1,\ldots,z_d \rangle \to {\mathbf L}$ be a homomorphism to a
skew field $\mathbf{L}$. We notice that for any skew field
$\mathbf{D}$ over $\field$, a rational expression $r$ can be
evaluated on a $d$-tuple $x=(x_1,\ldots,x_d)$ of elements of
$\mathbf{D}$ provided all the inverses involved in the calculation
of $r(x)$ exist, i.e., all the elements to be inverted are
nonzero. Therefore, we can define $\theta(r)$ as the evaluation
$r(\phi(z_1),\ldots,\phi(z_d))$ whenever this is possible.

We claim that if $r$ is equivalent to $0$ then
$r(\phi(z_1),\ldots,\phi(z_d))$ is either $0$ or undefined.
Indeed, $r$ being equivalent to $0$ means that for every matrix
size $n$, $r$ is either $0$ or undefined on the skew field of
fractions $D_n$ of the ring of generic matrices $G_n$, see the
proof of Proposition \ref{prop:nc-invert}. In other words, $r$
 is a rational identity for $D_n$, $n=1,2,\ldots$, hence (see
\cite[Theorem 8.3.3 and Corollary 8.2.16]{Row80})  $r$ is a
rational identity for any skew field over ${\field}$. In a little
bit more details, the fact that $r$
 is a rational identity for $D_n$, $n=1,2,\ldots$ (or just for a sequence $D_{n_j}$, $n_j\to\infty$),
implies by a simple ultraproduct construction in the proof of
Corollary 8.2.16 in \cite{Row80} that $r$ is a rational identity
for a skew field ${\mathbf D}$  that is infinite dimensional over
an infinite center; hence by the first fundamental theorem of
Amitsur \cite[Theorem 8.2.15]{Row80}, $r$ is a rational identity
for any skew field over ${\field}$. At any rate, $r$ is a rational
identity for ${\mathbf L}$, hence, $r(\phi(z_1),\ldots,\phi(z_d))$
is either $0$ or undefined.

We can now define $\skfield_0$ to consist of all noncommutative
rational functions $\mathfrak{r}$ that can be represented by
noncommutative rational expressions $r$ such that $\theta(r)$ is
defined, and we set $\theta(\mathfrak{r}):=\theta(r)$. It is clear
that $\skfield_0$ is a subring of $\field\plangle
z_1,\ldots,z_d\prangle$ containing $\field\langle
z_1,\ldots,z_d\rangle$, that $\theta\colon\skfield_0\to\mathbf{L}$
is a homomorphism extending $\phi$ and satisfying \eqref{def:ext}.
Furthermore, $\theta$ is the only extension of $\phi$ to
$\skfield_0$. Hence, if $\theta'\colon\skfield_0'\to\mathbf{L}$ is
another extension of $\phi$ satisfying \eqref{def:ext}, then
$\theta$ coincides with $\theta'$ on
$\widetilde{\skfield}_0=\skfield_0\cap\skfield_0'$, and it is
obvious that the extension
$\widetilde{\theta}\colon\widetilde{\skfield}_0\to\mathbf{L}$
defined by
$\widetilde{\theta}:=\theta|_{\widetilde{\skfield}_0}=\theta'|_{\widetilde{\skfield}_0}$
satisfies \eqref{def:ext}.
\end{proof}

Finally, we introduce matrix-valued noncommutative rational
expressions and matrix-valued noncommutative rational functions.
We start with matrix-valued noncommutative polynomials (having
matrix rather than scalar coefficients) and use tensor
substitutions for evaluations on tuples of matrices. E.g., if
\begin{equation*}
P = P_{\emptyword} + P_{g_1} z_1 + P_{g_2}z_2 +
P_{g_1^2}z_1^2+P_{g_1g_2}z_1z_2+P_{g_2g_1}z_2z_1+P_{g_2^2}z_2^2
\end{equation*}
is a matrix-valued noncommutative polynomial of total degree $2$
with coefficients $P_w\in \rmat{\field}{p}{q}$, $|w|\le 2$, then
for a $d$-tuple $Z=(Z_1,\ldots,Z_d)$ of $n \times n$ matrices over
$\field$,
\begin{multline*}
\hspace{-.35cm}P(Z) = P_{\emptyword}\otimes I_n + P_{g_1}\otimes
Z_1 + P_{g_2}\otimes Z_2 + P_{g_1^2}\otimes
Z_1^2+P_{g_1g_2}\otimes
Z_1Z_2+P_{g_2g_1}\otimes Z_2Z_1+P_{g_2^2}\otimes Z_2^2\\
\in\rmat{\field}{p}{q}\otimes\mat{\field}{n}.
\end{multline*}
We will often use the canonical identification of
$\rmat{\field}{p}{q}\otimes\mat{\field}{n}$ with
$\rmat{\field}{pn}{qn}$, i.e., with $p\times q$ block matrices
with $n\times n$ block entries. \label{page:canon} Thus we will
often view $P(Z)$ above as a $pn \times qn$ matrix. We define
matrix-valued noncommutative rational expressions by starting with
matrix-valued noncommutative polynomials and applying successive
matrix arithmetic operations
--- addition, multiplication, and inversion, and forming block
matrices: a $p_1 \times q_1$ matrix of $p_2 \times q_2$
matrix-valued noncommutative rational expressions is a $p_1 p_2
\times q_1 q_2$ matrix-valued noncommutative rational expression.
 The domain of a matrix-valued noncommutative
rational expression $R$, $\dom R$, consists of all $d$-tuples $Z$
of square matrices of all sizes such that all the inverses
involved in the calculation of $R(Z)$ exist. E.g., consider the $1
\times 1$ matrix-valued rational expression \label{three_rats}
$$
R_1=\begin{bmatrix} 1 & 0 \end{bmatrix}
\begin{bmatrix} 1-z_1 & -z_2 \\ -z_2 & 1-z_1\end{bmatrix}^{-1}
\begin{bmatrix} 1 \\ 0 \end{bmatrix},
$$
with
$$
\dom{R_1}=\left\{ (Z_1,Z_2): \det \begin{bmatrix} I-Z_1 & -Z_2 \\
-Z_2 & I-Z_1\end{bmatrix} \neq 0\right\}.
$$
Notice that a $1\times 1$ matrix-valued noncommutative rational
expression is not necessarily the same as a scalar noncommutative
rational expression, since it may involve, as in this example,
intermediate matrix operations.

Equivalence of matrix-valued noncommutative rational expressions,
and matrix-valued noncommutative rational functions as equivalence
classes, are defined as in the scalar case. E.g., using a standard
Schur complement calculation, we can observe that the $1\times 1$
matrix-valued noncommutative rational expression $R_1$ above is
equivalent to the following two scalar noncommutative rational
expressions,
$$
r_2=(1-z_1-z_2 (1-z_1)^{-1}z_2)^{-1}
$$
and
$$
r_3=-z_2^{-1}(1-z_1)(z_2-(1-z_1)z_2^{-1}(1-z_1))^{-1},
$$
with
\begin{equation*}
\dom{r_2}=\{ (Z_1,Z_2): \det (I-Z_1)\neq 0,\ \det (I-Z_1-Z_2
(I-Z_1)^{-1}Z_2)\neq 0\}
\end{equation*}
and
\begin{equation*}
\dom{r_3}=\{ (Z_1,Z_2):\ \det (Z_2)\neq 0,\ \det
(Z_2-(I-Z_1)Z_2^{-1}(I-Z_1))\neq 0\}.
\end{equation*}

It is not \emph{a priori} clear whether a $p\times q$
matrix-valued noncommutative rational function is the same thing
as a $p\times q$ matrix of (scalar) noncommutative rational
functions; the question is whether any $p\times q$ matrix-valued
noncommutative rational function can be represented by a $p\times
q$ matrix of scalar noncommutative rational expressions. It turns
out that this is true, because noncommutative rational functions
form a skew field; see \cite[Remarks 2.16 and 2.11]{KVV} for
details.

We define the {\em domain of regularity of a matrix-valued
noncommutative rational function ${\mathfrak R}$} as the union of
the domains of regularity of all matrix-valued noncommutative
rational expressions representing this function, i.e.,
\begin{displaymath}
\dom{{\mathfrak R}} = \bigcup_{R \in {\mathfrak R}} \dom R.
\end{displaymath}
We emphasize that even for the case of a (scalar) noncommutative
rational function, we define its domain using all $1\times 1$
matrix-valued noncommutative rational expressions representing the
function, not just the scalar ones. E.g., in the examples above,
it is easily seen that $\dom r_2$ and $\dom r_3$ are both properly
contained in $\dom R_1$; so, if $\mathfrak{r}$ is the
corresponding noncommutative rational function, then $\dom
\mathfrak{r}\supseteq\dom R_1$. In fact, the result on the
singularities of minimal realization (to be discussed in Section
\ref{s:real}) implies that $\dom \mathfrak{r}=\dom R_1$. See
\cite[Remark 2.11]{KVV} for additional discussion and references.

We can also evaluate a matrix-valued noncommutative rational
expression $R$ on generic matrices as in the discussion preceding
the proof of Proposition~\ref{prop:nc-invert}, and introduce a
subset $\mathcal{N}_R\subseteq\mathbb{N}$ where the evaluation is
defined and the extended domain $\edom R$.  We then define
$\mathcal{N}_{\mathfrak{R}}$ and $\edom{{\mathfrak R}}$, the {\em
extended domain of regularity of a matrix-valued noncommutative
rational function ${\mathfrak R}$}, by
\begin{displaymath}
\mathcal{N}_\mathfrak{R}=\bigcup_{R \in {\mathfrak R}}
\mathcal{N}_R,\quad
 \edom{{\mathfrak R}} =
\bigcup_{R \in {\mathfrak R}} \edom R.
\end{displaymath}
We notice that while in general, for $R \in {\mathfrak R}$, $\dom
R\subsetneq\dom\mathfrak{R}$,
 it is always the case that $\edom
R=\edom\mathfrak{R}$ provided that
$\mathcal{N}_R=\mathcal{N}_\mathfrak{R}$; see \cite[Section
2]{KVV} for additional discussion.

\section{Realization theory for noncommutative rational functions}
\label{s:real}

It is a bitter experience that the constellation of foundational
facts underlying the classical Kalman realization theory for 1D
systems collapses for rational functions of several commuting
variables and commutative multidimensional systems. It is all the
more amazing that these facts do hold, with obvious modifications,
in the noncommutative setting. Noncommutative systems of the form
\eqref{nc-FM} below were first studied by Ball--Vinnikov
\cite{Cuntz2} in the conservative setting, in the context of
operator model theory for row contractions (Popescu
\cite{Popescu-model1,Popescu-model2,Popescu-CLT1,Popescu-CLT2})
and of representation theory of the Cuntz algebra
(Bratelli--Jorgensen \cite{BJ} and Davidson--Pitts \cite{DP}).
 On the other hand,
noncommutative realizations very similar to \eqref{tf-FM} were
considered much earlier in the theory of formal languages and
finite automata in the work of Kleene, Sch\" utzenberger and
Fliess \cite{Kle,Schutz61,Fliess74a}. A comprehensive study of
noncommutative realization theory appears in
Ball--Groenewald--Malakorn \cite{BGM1,BGM2,BGM3}; these papers
give a unified framework of \emph{structured noncommutative
multidimensional linear systems} for different kinds of
realization formulae. We also mention the paper by
Ball--Kaliuzhnyi-Verbovetskyi \cite{BK-V} where an even more
general class of noncommutative systems (given though in a
frequency domain) was described and the corresponding dilation
theory was developed.

A noncommutative multidimensional system is a system with
evolution along the free semigroup $\free_d$ on $d$ letters
$g_1,\ldots,g_d$ rather than along the multidimensional integer
lattice ${\mathbb Z}^d$. An example of system equations with
evolution along $\free_d$ is given by a \emph{noncommutative
Fornasini--Marchesini system} (see \cite{FM} for the original
commutative version):
\begin{equation}  \label{nc-FM}
\Sigma^{\rm FM}\colon \left\{
\begin{array}{rcl}
x(g_{1}w) & = &  A_{1} x(w) + B_{1} u(w), \\
     & \vdots &  \\
     x(g_{d}w) & = & A_{d} x(w) + B_{d}u(w), \\
y(w) & = & C x(w) + D u(w),
\end{array}  \right. \qquad (w\in\free_d).
\end{equation}

Applying to the system equations \eqref{nc-FM} an appropriately
defined formal noncommutative $z$-transform and under the
assumption that the state of the system is initialized at $0$ (so
that $x(\emptyword) = 0$), we arrive at the input-output relation
$$
\widehat{y}(z) = T_{\Sigma^{\rm FM}}(z) \widehat{u}(z)
$$
where the {\em transfer function} is given by
\begin{equation} \label{tf-FM}
 T_{\Sigma^{\rm FM}}(z) = D + C(I_m-A_1z_1-\cdots-A_dz_d)^{-1} (B_1z_1+\cdots+B_dz_d).
\end{equation}
Here $m$ is the dimension of the state space $\field^m$ where
vectors $x(w)$ live. We see that the transfer function is a
matrix-valued noncommutative rational function in noncommuting
indeterminates $z_{1}, \dots, z_{d}$ which is regular at zero,
i.e., zero belongs to its domain of regularity (a little more
precisely, the transfer function is the matrix-valued
noncommutative rational function defined by the matrix-valued
noncommutative rational expression \eqref{tf-FM}).

The system \eqref{nc-FM} is called \emph{controllable} (resp.,
\emph{observable}) if
$$\spn_{w\in\free_d,\,j=1,\ldots,d}\ran\{A^wB_j\}=\field^m,\quad
{\rm (resp.,}\ \bigcap_{w\in\free_d}\ker \{CA^w\}=\{0\}).$$

The following facts are fundamental for the noncommutative
realization theory:

\begin{enumerate}

\item Every matrix-valued noncommutative rational function which
is regular at zero admits a state space realization \eqref{tf-FM}.

\item An arbitrary realization \eqref{tf-FM} of a given
matrix-valued noncommutative rational function can be reduced via
an analogue of the Kalman decomposition to a controllable and
observable realization.

\item A realization \eqref{tf-FM} is controllable and observable
if and only if it is minimal, i.e., it has the smallest possible
state space dimension, and a minimal realization is unique up to a
unique similarity.

\item A minimal realization \eqref{tf-FM} can be constructed
canonically and explicitly from a matrix-valued noncommutative
rational function by means of the corresponding Hankel operator;
this ties in with the fact that the Hankel operator corresponding
to a matrix-valued noncommutative formal power series has finite
rank if and only if the power series represents a rational
function (an analogue of Kronecker's Theorem).

\item In a minimal realization \eqref{tf-FM}, the singularities of
the transfer function coincide with the singularities of the
resolvent; more precisely, the domain of regularity\footnote{In
fact, this is also the extended domain of regularity of the
transfer function.} of the transfer function \eqref{tf-FM} is
exactly
\begin{displaymath}
\{(Z_1,\ldots,Z_d) \colon \det(I-A_1 \otimes Z_1-\cdots-A_d
\otimes Z_d) \neq 0\}.
\end{displaymath}
\end{enumerate}

 For the proofs of items (1)--(4), including missing details and exact
 references to the earlier literature,
  we refer to \cite{BGM1} where
these facts are established in a more general setting of
structured noncommutative multidimensional systems.

As for item (5), it is amazingly difficult to prove ``by hands'';
the usual proofs for $d=1$ use the Hautus test for
controllability~/ observability, but this is no longer available.
A proof appears in \cite{KVV} using noncommutative backward shifts
which are a particular instance of the difference-differential
calculus for noncommutative rational functions. This is a special
case of the difference-differential calculus for general
noncommutative functions, which are functions on tuples of square
matrices of all sizes which respect direct sums and simultaneous
similarities. The forthcoming basic reference is \cite{ncfound}.
The difference-differential calculus for noncommutative rational
functions can be developed in a more straightforward manner than
in the general case, and we will do this later in Section
\ref{s:difdif}.

Another important example of a structured noncommutative
multidimensional system is a \emph{noncommutative Givone--Roesser
system} (for the original commutative version of these systems,
see \cite{GR}):
\begin{equation}  \label{nc-GR}
\Sigma^{\rm GR}\colon \left\{
\begin{array}{rcl}
x_1(g_{1}w) & = &  A_{11} x_1(w)+\cdots + A_{1d}x_d(w)+B_{1} u(w), \\
     & \vdots &  \\
     x_d(g_{d}w) & = & A_{d1} x_1(w)+\cdots +A_{dd}x_d + B_{d}u(w), \\
y(w) & = & C_1 x_1(w)+\cdots +C_dx_d(w) + D u(w),
\end{array}  \right. \quad (w\in\free_d).
\end{equation}
Here $x_j\in\field^{m_j}$, i.e., the state space has $d$
components: $\field^m=\field^{m_1}\oplus\cdots\oplus\field^{m_d}$.
 The transfer function of the noncommutative
Givone--Roesser system is given by
\begin{equation} \label{tf-GR}
 T_{\Sigma^{\rm GR}}(z) = D + C(I_m-\Delta(z)A)^{-1} \Delta(z)B,
\end{equation}
where $A$ is a $d\times d$ block matrix with blocks
$A_{ij}\in\rmat{\field}{m_i}{m_j}$, and $\Delta(z)$ is a $d\times
d$ block diagonal matrix, with matrix-valued noncommutative
monomials $I_{m_1}z_1$, \ldots, $I_{m_d}z_d$ on the diagonal.

The system \eqref{nc-GR} is called \emph{controllable} (resp.,
\emph{observable}) if
$$\spn_{w\in\free_d}\ran\{P_jA^wB\}=\field^{m_j},\quad
{\rm (resp.,}\ \bigcap_{w\in\free_d}\ker
\{CA^w|_{\field^{m_j}}\}=\{0\}),\quad j=1,\ldots,d.$$ Here $P_j$
is the orthogonal projection of the state space $\field^m$ onto
its $j$-th component $\field^{m_j}$.

As we already mentioned, items (1)--(4) above hold for arbitrary
structured noncommutative multidimensional system realizations, in
particular for the noncommutative Givone--Roesser realization
\eqref{nc-GR}. The result in \cite{KVV}, i.e., item (5), has been
proved for a much more general class of realizations than
\eqref{tf-FM}; however, this class does not cover all structured
noncommutative multidimensional system realizations, and we
conjecture that the result might fail for noncommutative
Givone--Roesser realizations.

On the other hand, the symmetry appearing in Givone--Roesser
system equations\footnote{In the case of $\field=\mathbb{C}$, the
adjoint system $\Sigma^{*GR}$ has the same form as $\Sigma^{GR}$,
but with switching the input and output spaces and
replacing the coefficient block matrix $\begin{bmatrix} [A_{ij}] & \col[B_j]\\
\row[C_i] & D
\end{bmatrix}$  by its adjoint.} makes Givone--Roesser realizations more suitable
 for problems where this
symmetry is essential. For general structured noncommutative
multidimensional systems, basic arithmetic operations on transfer
functions (sum, product, inversion) correspond to certain
operations on systems, in the same manner as it occurs in the
classical 1D case. For noncommutative Givone--Roesser systems, we
have that, in addition, the adjoint of the transfer function is
the transfer function of the adjoint system. Exploiting these
correspondences, one can study noncommutative rational functions
with certain symmetries in terms of their realizations.

In the paper by Alpay--Kaliuzhnyi-Verbovetskyi \cite{AK2}, classes
of matrix-valued noncommutative rational functions with various
symmetries were studied in terms of their Givone--Roesser
realizations. A sample result from \cite{AK2} is a version of the
so-called lossless bounded real lemma (cf. \cite{BGM3} for the
general bounded real lemma in the noncommutative setting). Let $F$
be a $q\times q$ matrix-valued noncommutative rational function
over the field $\mathbb{C}$ which is regular at zero. Let
$J=J^{-1}=J^*\in\mat{\mathbb{C}}{q}$. Then $F$ is called
\emph{matrix-$J$-unitary} on the set $\mathcal{J}_d$ of $d$-tuples
of skew-Hermitian $n\times n$ matrices, $n=1,2,\ldots$ (which is a
noncommutative analogue of the imaginary axis of the complex
plain) if
\begin{equation}\label{eq:j-unitary}
 F(Z)(J\otimes I_n)F(Z)^*=J\otimes I_n \qquad (Z\in \mathcal{J}_d)
\end{equation}
at all points $Z\in \mathcal{J}_d\cap\dom F$. Suppose that $F$ is
a $q\times q$ matrix-valued noncommutative rational function over
$\mathbb{C}$ which is regular at zero,  and let
\eqref{nc-GR}--\eqref{tf-GR} be its minimal noncommutative
Givone--Roesser system realization. Then $F$ is matrix-$J$-unitary
on $\mathcal{J}_d$ if and only if
\begin{itemize}
    \item[(a)] $D$ is $J$-unitary, i.e., $DJD^*=J$;
    \item[(b)] there exists an invertible Hermitian solution
    $H=\diag(H_1,\ldots,H_d)$, with $H_j\in\mat{\mathbb{C}}{m_j}$,
    of the Lyapunov equation
    $$A^*H+HA=-C^*JC,$$
    and
    $$B=-H^{-1}C^*JD.$$
\end{itemize}
This matrix $H$ is uniquely determined by a minimal realization
\eqref{nc-GR}--\eqref{tf-GR}, and for this realization it is
called the \emph{associated structured Hermitian matrix}.
Moreover, $F$ is \emph{matrix-$J$-inner}, i.e., in addition to
\eqref{eq:j-unitary}, $F$ is $J$-contractive on the set of all
$d$-tuples of $n\times n$ matrices $Z_j$ such that $Z_j+Z_j^*>0$,
$n=1,2,\ldots$ (this set is a noncommutative analogue of the right
half-plane), if and only if the associated structured Hermitian
matrix is positive definite.

\section{Difference-differential calculus}
\label{s:difdif} In this section we develop the
difference-differential calculus for noncommutative rational
functions and discuss various special cases and applications:
directional derivatives, backward shifts, finite difference
formulae, higher order difference-differential operators, and
connections with formal power series.

The difference-differential calculus for noncommutative rational
functions is based on difference-differential operators,
$$\Delta_{j}\colon \skfield\to\skfield\otimes\skfield,\quad
j=1,\ldots,d,$$  which are noncommutative counterparts of both
partial finite difference and partial differential operators; here
$\skfield=\field\plangle z_1,\ldots,z_d\prangle$. We extend
$\Delta_{j}$ to matrix-valued noncommutative rational functions by
applying these operators entrywise; we remind the reader that a
matrix-valued noncommutative rational function is the same as a
matrix of (scalar) noncommutative rational functions. We thus have
$$\Delta_{j}\colon \rmat{\skfield}{p}{q}\to\rmat{(\skfield\otimes\skfield)}{p}{q},\quad
j=1,\ldots,d.$$  Our strategy will be to define $\Delta_{j}$ on
matrix-valued noncommutative rational expressions recursively,
starting with matrix-valued noncommutative polynomials, and
postulating linearity and an appropriate version of the Leibniz
rule. We then check that equivalence is preserved, thus we can
define $\Delta_{j}$  on matrix-valued noncommutative rational
functions.

To define $\Delta_{j}$  on matrix-valued noncommutative rational
expressions, we will need to introduce matrix-valued
noncommutative rational expressions in two tuples of noncommuting
indeterminates, $z_1$, \ldots, $z_d$ and $z'_1$, \ldots, $z'_d$.
They are obtained by applying successive matrix arithmetic
operations to tensor products of matrix-valued noncommutative
rational expressions in $z_1$, \ldots, $z_d$ and in $z'_1$,
\ldots, $z'_d$ and forming block matrices. More precisely,
\begin{dfn}\label{dfn:ncexpr2}
\begin{enumerate}
\item \label{tensorprod} If $R$ and $R'$ are $p\times q$ and
$p'\times q'$ matrix-valued noncommutative rational expressions in
$z_1$, \ldots, $z_d$ and in $z'_1$, \ldots, $z'_d$, respectively,
then $R\otimes R'$ is a $pp'\times qq'$ matrix-valued
noncommutative rational expression in $z_1$, \ldots, $z_d$ and
$z'_1$, \ldots, $z'_d$, with $\dom (R\otimes R')=\dom R\times \dom
R'$, and the evaluation is defined by
$$(R\otimes R')(Z,Z')=R(Z)\otimes R'(Z').$$
Here for $Z\in\mattuple{\field}{n}{d}$ and
$Z'\in\mattuple{\field}{n'}{d}$ we have
$R(Z)\in\rmat{\field}{p}{q}\otimes\mat{\field}{n}$,
$R'(Z')\in\rmat{\field}{p'}{q'}\otimes\mat{\field}{n'}$ and
$R(Z)\otimes
R'(Z')\in\rmat{\field}{pp'}{qq'}\otimes\mat{\field}{n}\otimes\mat{\field}{n'}$
via the canonical identification of
$\rmat{\field}{p}{q}\otimes\rmat{\field}{p'}{q'}$ with
$\rmat{\field}{pp'}{qq'}$.
 \item \label{formsum} If $R_1$ and $R_2$ are $p \times q$
matrix-valued noncommutative rational expressions in two tuples of
indeterminates, then so is $R_1 + R_2$, $\dom  (R_1+R_2) = \dom
R_1 \cap \dom R_2$,  and the evaluation is given by
$$(R_1+R_2)(Z,Z') = R_1(Z,Z') + R_2(Z,Z').$$
\item \label{formprod} If $R_1$ and $R_2$ are $p \times q$ and  $q
\times r$ matrix-valued noncommutative rational expressions in two
tuples of indeterminates, then $R_1 R_2$ is $p \times r$
matrix-valued, $\dom (R_1 R_2) = \dom R_1 \cap \dom R_2$,
 and the
evaluation is given by  $$(R_1 R_2)(Z,Z') = R_1(Z,Z') R_2(Z,Z').$$
\item \label{forminv} If $R$ is a $p \times p$ matrix-valued
noncommutative rational expression in two tuples of
indeterminates,
 and $\det R(Z,Z')$ does not vanish identically on $\dom R$,
 then so is $R^{-1}$,
$$
\dom R^{-1} = \left\{ (Z,Z') \in \dom R \colon \det R(Z,Z') \neq
0\right\},
$$
and  $$R^{-1}(Z,Z') = R(Z,Z')^{-1}.$$ \item \label{formmatrix} If
$R_{ab}$, $a=1,\ldots,p_2$, $b=1,\ldots,q_2$, are $p_1 \times q_1$
matrix-valued noncommutative rational expressions in two tuples of
indeterminates, then $R =
\left[R_{ab}\right]_{a=1,\ldots,p_2;\,b=1,\ldots,q_2}$ is  $p_1
p_2 \times q_1 q_2$ matrix-valued,
$$
\dom R = \bigcap_{a=1,\ldots,p_2;\,b=1,\ldots,q_2} \dom R_{ab}$$
and $$ R(Z,Z') =
\left[R_{ab}(Z,Z')\right]_{a=1,\ldots,p_2;\,b=1,\ldots,q_2}.
$$
\end{enumerate}
\end{dfn}

We notice that for a $p\times q$  matrix-valued noncommutative
rational expression $R$ in two tuples of indeterminates and for
$Z\in\mattuple{\field}{n}{d}$ and $Z'\in\mattuple{\field}{n'}{d}$
with $(Z,Z')\in\dom R$, the evaluation
$R(Z,Z')\in\rmat{\field}{p}{q}\otimes\mat{\field}{n}\otimes\mat{\field}{n'}$.
We will often use the canonical identification of
$\rmat{\field}{p}{q}\otimes\mat{\field}{n}\otimes\mat{\field}{n'}=
\rmat{\field}{p}{q}\otimes\left(\mat{\field}{n}\otimes\mat{\field}{n'}\right)$
with $\rmat{\field}{pnn'}{qnn'}$ (cf. page \pageref{page:canon}).
Thus we will often view $R(Z,Z')$ as a $pnn'\times qnn'$ matrix.
(An alternative interpretation of the values  $R(Z,Z')$ as linear
mappings will be considered later --- see the discussion preceding
Theorem \ref{thm:rtriangle}.)

Two $p\times q$ matrix-valued noncommutative rational expressions,
$R_1$ and $R_2$, in two tuples of indeterminates are called
\emph{equivalent} if $\dom R_1\cap\dom R_2\neq \emptyset$ and
$R_1(Z,Z')=R_2(Z,Z')$ for all pairs of $d$-tuples $(Z,Z')$ in
$\dom R_1\cap\dom R_2$. It would be natural to define
matrix-valued noncommutative rational functions in two tuples of
noncommuting indeterminates as the corresponding equivalence
classes. Doing this in a meaningful way requires analogues of
Propositions \ref{prop:nc-invert} and \ref{prop:univer}; see
\cite{Co97,Co98} for related issues. Here we restrict ourselves to
a relatively simple situation.
\begin{thm}\label{thm:ncrat2}
Equivalence classes of $p\times q$ matrix-valued noncommutative
rational expressions in two tuples of indeterminates, which are
formed by using only the rules (1), (2), (3), and (5) in
Definition \ref{dfn:ncexpr2}, are in a natural one-to-one
correspondence with $p\times q$ matrices over
$\skfield\otimes\skfield$.
\end{thm}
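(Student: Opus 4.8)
The plan is to construct a single map $\Phi$ from two‑tuple expressions (built only by rules (1), (2), (3), (5)) to matrices over $\skfield\otimes\skfield$, show it is compatible with evaluation on pairs of matrix tuples, and then read off the bijection from the faithfulness of that evaluation. First I would define $\Phi$ by recursion on the structure permitted by Definition \ref{dfn:ncexpr2}. On an atomic tensor $R\otimes R'$ from rule (1), where $R$ is a $p\times q$ expression in $z_1,\dots,z_d$ representing $\mathfrak{R}\in\rmat{\skfield}{p}{q}$ and $R'$ a $p'\times q'$ expression in $z'_1,\dots,z'_d$ representing $\mathfrak{R}'\in\rmat{\skfield}{p'}{q'}$, I set $\Phi(R\otimes R'):=\mathfrak{R}\otimes\mathfrak{R}'$, viewed in $\rmat{(\skfield\otimes\skfield)}{pp'}{qq'}$ via the canonical entrywise identification of $\rmat{\skfield}{p}{q}\otimes\rmat{\skfield}{p'}{q'}$ with $\rmat{(\skfield\otimes\skfield)}{pp'}{qq'}$; then I extend $\Phi$ to rules (2), (3), (5) by declaring it to respect sums, products, and block‑matrix formation. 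The exclusion of rule (4) is exactly what makes this possible: $\skfield\otimes\skfield$ is only a ring (with zero divisors in general), not a skew field, so $\Phi$ must never be asked to invert. As a map on \emph{expressions} (syntax trees) $\Phi$ is then automatically well defined; the whole content is to push it down to equivalence classes.

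Next I would record the compatibility of evaluations. For $c=\sum_i\mathfrak{a}_i\otimes\mathfrak{b}_i\in\skfield\otimes\skfield$ I define $c(Z,Z'):=\sum_i\mathfrak{a}_i(Z)\otimes\mathfrak{b}_i(Z')$ wherever the $\mathfrak{a}_i,\mathfrak{b}_i$ are regular, extended entrywise to matrices; a short argument (rewrite $c$ with the first tensor factors $\field$‑linearly independent, and use that evaluation on $\skfield$ is $\field$‑linear where defined) shows this is independent of the representation of $c$. I would then prove by induction over rules (1), (2), (3), (5) that $R(Z,Z')=\Phi(R)(Z,Z')$ for every $(Z,Z')\in\dom R$: the base case is immediate from the evaluation rule for $R\otimes R'$ together with the definition of $\Phi$, and the inductive steps are routine since evaluation of matrices over $\skfield\otimes\skfield$ respects $+$, $\cdot$, and blocks. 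The same induction yields a decisive by‑product: the defining inequalities of $\dom R$ separate the two tuples, so $\dom R$ is always a \emph{product} $\mathcal{D}\times\mathcal{D}'$, with $\mathcal{D}$ (resp.\ $\mathcal{D}'$) a finite intersection of domains of single‑tuple rational functions in $z$ (resp.\ $z'$).

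The heart of the proof, and the step I expect to be the main obstacle, is the faithfulness of evaluation on $\skfield\otimes\skfield$: if $c=\sum_i\mathfrak{a}_i\otimes\mathfrak{b}_i$ with $\mathfrak{a}_i$ linearly independent over $\field$ and $c(Z,Z')=0$ for all $(Z,Z')$ in a product $\mathcal{D}\times\mathcal{D}'$ of rich enough domains, then every $\mathfrak{b}_i=0$, hence $c=0$. To prove it I would fix $i_0$, suppose $\mathfrak{b}_{i_0}\neq0$, choose $Z'\in\mathcal{D}'$ with $\mathfrak{b}_{i_0}(Z')\neq0$ (possible because a nonzero element of $\skfield$ cannot vanish on its whole domain --- this is precisely Proposition \ref{prop:nc-invert} --- and because over an infinite field these domains are Zariski dense for all large sizes), and pick a linear functional $\psi$ on $\mat{\field}{n'}$ with $\psi(\mathfrak{b}_{i_0}(Z'))\neq0$. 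Applying $\mathrm{id}\otimes\psi$ gives $\sum_i\psi(\mathfrak{b}_i(Z'))\,\mathfrak{a}_i(Z)=0$ for all $Z\in\mathcal{D}$, so the single rational function $\sum_i\psi(\mathfrak{b}_i(Z'))\,\mathfrak{a}_i$ vanishes on $\mathcal{D}$ and is therefore $0$ in $\skfield$; linear independence then forces $\psi(\mathfrak{b}_{i_0}(Z'))=0$, a contradiction. The finite‑field case is handled by passing to a finite extension and using the induced homomorphism $\phi_n$ exactly as in the proof of Proposition \ref{prop:nc-invert}. The delicate point here is the domain bookkeeping: when I apply this lemma to an entry of $\Phi(R)$ I must reconcile $\dom R=\mathcal{D}\times\mathcal{D}'$ with the (also product‑shaped) domain of a linearly independent representation of that entry, and it is the product structure established above that lets the two intersect in a still‑rich product on which the value is $0$.

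Finally I would assemble the bijection $\overline{\Phi}$. For well‑definedness on classes, if $R_1\sim R_2$ then $R_1-R_2$ (still built by rules (1), (2), (3), (5), using the constant $-1=(-1)\otimes 1$) vanishes on $\dom(R_1-R_2)=\dom R_1\cap\dom R_2\neq\emptyset$, so by the previous two paragraphs $\Phi(R_1)-\Phi(R_2)=\Phi(R_1-R_2)=0$. For injectivity, conversely, $\Phi(R_1)=\Phi(R_2)$ forces $R_1-R_2$ to vanish on its nonempty domain by compatibility alone, i.e.\ $R_1\sim R_2$ (nonemptiness of $\dom R_1\cap\dom R_2$ following from richness of the domains, exactly as in the single‑tuple case). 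For surjectivity, given $C\in\rmat{(\skfield\otimes\skfield)}{p}{q}$, I write each entry $C_{\alpha\beta}=\sum_i\mathfrak{a}_i\otimes\mathfrak{b}_i$, realize the $\mathfrak{a}_i,\mathfrak{b}_i$ by scalar expressions $a_i$ in $z$ and $b_i$ in $z'$, form $\sum_i(a_i\otimes b_i)$ using rules (1) and (2), and assemble these into a block matrix by rule (5); then $\Phi$ of the result is $C$. This exhibits $\overline{\Phi}$ as the asserted natural one‑to‑one correspondence --- in fact an isomorphism respecting sums, products, and block structure.
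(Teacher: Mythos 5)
Your proposal is correct and follows essentially the same route as the paper: the surjectivity step is the paper's observation that every admissible expression is equivalent to a matrix of sums of tensor products, and your faithfulness lemma (rewrite with the first factors $\field$-linearly independent, choose $Z'$ where the second factors do not all vanish, extract a scalar via a functional, and invoke Proposition \ref{prop:nc-invert} together with the Zariski-density/finite-field-extension argument) is exactly the paper's uniqueness argument, with your linear functional $\psi$ playing the role of the matrix entry $(i,j)$ there. Your explicit construction of $\Phi$ and the product-structure bookkeeping for domains are just a more detailed write-up of what the paper leaves implicit.
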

\begin{proof}
Since a matrix-valued noncommutative rational expression is
equivalent to a matrix of scalar noncommutative rational
expressions, it is clear that any matrix-valued noncommutative
rational expression in two tuples of indeterminates which is
formed by using only the rules (1), (2), (3), and (5), is
equivalent to a matrix whose entries are sums of tensor products
of noncommutative rational expressions. It only remains to show
that the corresponding elements of $\skfield\otimes\skfield$ are
uniquely determined. Let $\mathfrak{r}_1$, \ldots,
$\mathfrak{r}_\ell$ and $\mathfrak{r}'_1$, \ldots,
$\mathfrak{r}'_\ell$ be noncommutative rational functions in
$z_1$, \ldots, $z_d$ and in $z_1'$, \ldots, $z_d'$ represented by
noncommutative rational expressions $r_1$, \ldots, $r_\ell$ and
$r_1'$, \ldots, $r'_\ell$, respectively. We have to show that if
$r_1\otimes r_1'+\cdots +r_\ell\otimes r'_\ell$ is equivalent to
zero then $\mathfrak{r}_1\otimes \mathfrak{r}_1'+\cdots
+\mathfrak{r}_\ell\otimes \mathfrak{r}'_\ell=0$ in
$\skfield\otimes\skfield$. We may assume that $\mathfrak{r}_1$,
\ldots, $\mathfrak{r}_\ell$ are linearly independent over
$\field$, since otherwise the number of terms in the tensor
combination can be reduced by one.

We may assume that $r'_1$, \ldots, $r'_\ell$ are not all
equivalent to zero, since otherwise there is nothing to prove.
Take $Z'\in\dom r'_1\cap\cdots\cap\dom r'_\ell$ such that
$r'_1(Z')$, \ldots, $r'_\ell(Z')$ are not all zero. (The existence
of such a $Z'$ is established analogously to the reasoning in the
footnote on page \pageref{page:reason}.) This implies that the
matrix elements $(r'_1(Z'))_{ij}$, \ldots, $(r'_\ell(Z'))_{ij}$
are not all zero for some $i$ and $j$. For an arbitrary $Z\in\dom
r_1\cap\cdots\cap\dom r_\ell$, we have $r_1(Z)\otimes
r'_1(Z')+\cdots +r_\ell(Z)\otimes r'_\ell(Z')=0$, and therefore
$$(r'_1(Z'))_{ij}r_1(Z)+\cdots +(r'_\ell(Z'))_{ij}r_\ell(Z)=0$$ is
a nontrivial linear dependance relation for matrices $r_1(Z)$,
\ldots, $r_\ell(Z)$. Therefore $\mathfrak{r}_1$, \ldots,
$\mathfrak{r}_\ell$ are linearly dependent, a contradiction.
\end{proof}
We define the {\em domain of regularity, $\dom \mathfrak{R}$, of a
matrix ${\mathfrak R}$ over $\skfield\otimes\skfield$} as the
union of the domains of regularity of all matrix-valued
noncommutative rational expressions in two tuples of
indeterminates representing $\mathfrak{R}$.

We can also evaluate a matrix-valued noncommutative rational
expression $R$ in two tuples of indeterminates on generic matrices
$T_1$, \ldots, $T_d$ and $T'_1$, \ldots, $T'_d$,
 as in the proof of Proposition~\ref{prop:nc-invert}, and
introduce a subset $\mathcal{N}_R\subseteq\mathbb{N}$ where the
evaluation is defined and the extended domain, $\edom R$.  We then
define $\mathcal{N}_{\mathfrak{R}}$ and $\edom{{\mathfrak R}}$,
the {\em extended domain of regularity of a matrix ${\mathfrak R}$
over $\skfield\otimes\skfield$}, by taking the union over all
matrix-valued noncommutative rational expressions in two tuples of
indeterminates representing $\mathfrak{R}$.
\begin{rem}\label{rem:ncexprell}
We can also introduce matrix-valued noncommutative rational
expressions in $\ell$ tuples of noncommuting indeterminates
$z_1^{(j)}$, \ldots, $z_d^{(j)}$, $j=1$, \ldots, $\ell$,
analogously to Definition \ref{dfn:ncexpr2}, except that $R$ and
$R'$ in rule (1) are now matrix-valued noncommutative rational
expressions in $t$ tuples and in $s$ tuples of indeterminates
respectively, with $t+s=\ell$. Namely, if $R$ and $R'$ are
$p\times q$ and $p'\times q'$ matrix-valued noncommutative
rational expressions in $z_1^{(j)}$, \ldots, $z_d^{(j)}$, $j=1$,
\ldots, $t$, and in $z_1^{(j)}$, \ldots, $z_d^{(j)}$, $j=t+1$,
\ldots, $\ell$, respectively, then $R\otimes R'$ is a $pp'\times
qq'$ matrix-valued noncommutative rational expression in
$z_1^{(j)}$, \ldots, $z_d^{(j)}$, $j=1$, \ldots, $\ell$, with
$\dom (R\otimes R')=\dom R\times \dom R'$, and the evaluation is
defined by
$$(R\otimes R')(Z^{(1)},\ldots,Z^{(\ell)})=R(Z^{(1)},\ldots,Z^{(t)})\otimes
R'(Z^{(t+1)},\ldots, Z^{(\ell)}).$$ We notice that for a $p\times
q$  matrix-valued noncommutative rational expression $R$ in $\ell$
tuples of indeterminates and for
$Z^{(j)}\in\mattuple{\field}{n_j}{d}$, $j=1$, \ldots, $\ell$, with
$(Z^{(1)},\ldots,Z^{(\ell)})\in\dom R$, the evaluation
$$R(Z^{(1)},\ldots,Z^{(\ell)})\in\rmat{\field}{p}{q}\otimes\mat{\field}{n_1}\otimes\cdots\otimes\mat{\field}{n_\ell}.$$
We will often use the canonical identification of
\begin{multline*}
\rmat{\field}{p}{q}\otimes\mat{\field}{n_1}\otimes\cdots\otimes\mat{\field}{n_{\ell-1}}\otimes\mat{\field}{n_\ell}\\=
\rmat{\field}{p}{q}\otimes
\left(\mat{\field}{n_1}\otimes(\cdots\otimes(\mat{\field}{n_{\ell-1}}\otimes\mat{\field}{n_\ell})\cdots)\right)
\end{multline*}
with $\rmat{\field}{pn_1\cdots n_\ell}{qn_1\cdots n_\ell}$.  Thus
we will often view $R(Z^{(1)},\ldots,Z^{(\ell)})$ as a $pn_1\cdots
n_\ell\times qn_1\cdots n_\ell$ matrix. We then define the
equivalence of matrix-valued noncommutative rational expressions
in $\ell$ tuples of indeterminates and show, as in Theorem
\ref{thm:ncrat2}, that equivalence classes of $p\times q$
matrix-valued noncommutative rational expressions in $\ell$ tuples
of indeterminates, which are formed by using only the analogues of
the rules (1), (2), (3), and (5) in Definition \ref{dfn:ncexpr2},
are in a natural one-to-one correspondence with $p\times q$
matrices over $\skfield^{\otimes\ell}$. We can now define the
\emph{domain of regularity, $\dom\mathfrak{R}$, of a matrix
$\mathfrak{R}$ over $\skfield^{\otimes \ell}$} as the union of the
domains of regularity of all matrix-valued noncommutative rational
expressions in $\ell$ tuples of indeterminates representing
$\mathfrak{R}$. We can also introduce a subset
$\mathcal{N}_R\subseteq\mathbb{N}$ where the evaluation on generic
matrices of a matrix-valued noncommutative rational expression $R$
in $\ell$ tuples of indeterminates is defined and the extended
domain, $\edom R$; we then define $\mathcal{N}_\mathfrak{R}$ and
$\edom\mathfrak{R}$, the \emph{extended domain of regularity of a
matrix $\mathfrak{R}$ over $\skfield^{\otimes \ell}$.}
\end{rem}

We proceed now with the definition of difference-differential
operators $\Delta_j$. For a $p\times q$ matrix-valued
noncommutative rational expression $R$, $\Delta_j(R)$ is a
$p\times q$ matrix-valued noncommutative rational expression in
two tuples of indeterminates.
\begin{dfn}\label{dfn:rdif}
\begin{enumerate}
\item  For a matrix-valued noncommutative polynomial $P \in
\rmat{\field}{p}{q} \langle z_1,\ldots,z_d \rangle$,
$P(z)=\sum_{w\in\free_d}P_wz^w$, set
\begin{equation*}
\Delta_j(P)=\sum_{w\in\free_d}P_{w}\sum_{u,v\colon
w=ug_jv}z^u\otimes z^{\prime v} \in \rmat{\left(\field \langle
z_1,\ldots,z_d \rangle\otimes\field \langle z'_1,\ldots,z'_d
\rangle\right)}{p}{q}.
\end{equation*}
\item  If $R_1$ and $R_2$ are $p \times q$ matrix-valued
noncommutative rational expressions, then
\begin{equation*}
\Delta_j(R_1+R_2)=\Delta_j(R_1)+\Delta_j(R_2).
\end{equation*}
\item  If $R_1$ is a $p \times q$ matrix-valued noncommutative
rational expression and $R_2$ is a $q \times s$ matrix-valued
noncommutative rational expression, then
\begin{equation*}
\Delta_j(R_1R_2)=\Delta_j(R_1)(1\otimes R_2)+(R_1\otimes
1)\Delta_j(R_2).
\end{equation*}
\item  If $R$ is a $p \times p$ matrix-valued noncommutative
rational expression which is not identically singular, then
\begin{equation*}
\Delta_j(R^{-1})=-(R^{-1}\otimes 1)\Delta_j(R)(1\otimes R^{-1}).
\end{equation*}
 \item  If $R_{ab}$, $a=1,\ldots,p_2$,
$b=1,\ldots,q_2$, are $p_1 \times q_1$ matrix-valued
noncommutative rational expressions and $R =
\left[R_{ab}\right]_{a=1,\ldots,p_2;\,b=1,\ldots,q_2}$, then
$\Delta_j(R)=\left[(\Delta_j(R))_{ab}\right]_{a=1,\ldots,p_2;\,b=1,\ldots,q_2}$,
\begin{equation*}
(\Delta_j(R))_{ab}=\Delta_j(R_{ab}).
\end{equation*}
\end{enumerate}
\end{dfn}

It is clear that $\dom \Delta_j(R)=\dom R\times\dom R$.

We give some examples to illustrate Definition \ref{dfn:rdif}.
\begin{ex}\label{ex:poly}
For a (scalar) noncommutative polynomial of total degree two in
two indeterminates
\begin{displaymath}
 p=\alpha + \beta z_1 +
\gamma z_2 + \delta z_1^2 + \epsilon z_1 z_2 + \zeta z_2 z_1 +
\eta z_2^2,
\end{displaymath}
we have
$$\Delta_{1}(p)=\beta (1\otimes 1)+\delta
(1\otimes z_1') +\delta (z_1\otimes 1)+\epsilon (1\otimes
z_2')+\zeta (z_2\otimes 1)$$ and
$$\Delta_{2}(p)=\gamma (1\otimes 1)+\epsilon (z_1\otimes 1)+\zeta
(1\otimes z_1')+\eta (1\otimes z_2')+\eta (z_2\otimes 1).$$
\end{ex}
\begin{ex}\label{ex:rat}
For the three equivalent $1\times 1$ matrix-valued rational
expressions $R_1$, $r_2$, and $r_3$ introduced on page
\pageref{three_rats} we have (up to trivial equivalences)
\begin{multline*}
\Delta_{1}(R_1)\sim\left(\begin{bmatrix} 1 & 0
\end{bmatrix}\otimes 1\right) \left(\begin{bmatrix} 1-z_1 & -z_2 \\ -z_2 &
1-z_1\end{bmatrix}^{-1}\otimes
1\right)\\
\cdot\left(1\otimes\begin{bmatrix} 1-z'_1 & -z'_2 \\ -z'_2 &
1-z'_1\end{bmatrix}^{-1}\right)\left(1\otimes
\begin{bmatrix} 1 \\ 0 \end{bmatrix}\right),
\end{multline*}
\begin{multline*}
\Delta_{2}(R_1)\sim\left(\begin{bmatrix} 1 & 0
\end{bmatrix}\otimes 1\right) \left(\begin{bmatrix} 1-z_1 & -z_2 \\ -z_2 &
1-z_1\end{bmatrix}^{-1}\otimes
1\right)\cdot\begin{bmatrix} 0 & 1\\
1 & 0
\end{bmatrix}(1\otimes 1)\\
\cdot \left(1\otimes\begin{bmatrix} 1-z'_1 & -z'_2 \\ -z'_2 &
1-z'_1\end{bmatrix}^{-1}\right)\left( 1\otimes \begin{bmatrix} 1 \\
0
\end{bmatrix}\right),
\end{multline*}
\begin{multline*}
\Delta_{1}(r_2)\sim\left((1-z_1-z_2 (1-z_1)^{-1}z_2)^{-1}\otimes
1\right)\\
\cdot\left( 1\otimes 1+(z_2(1-z_1)^{-1}\otimes 1)(1\otimes
(1-z_1')^{-1}z_2')\right)\\
\mbox{\hspace{5cm}}\cdot\left(1\otimes(1-z_1'-z_2' (1-z_1')^{-1}z_2')^{-1}\right),\\
\end{multline*}
\begin{multline*}
\Delta_{2}(r_2)\sim\left((1-z_1-z_2 (1-z_1)^{-1}z_2)^{-1}\otimes
1\right)\\
\cdot\left( 1\otimes (1-z_1')^{-1}z_2'+z_2(1-z_1)^{-1}\otimes
1\right)\\
\mbox{\hspace{5cm}}\cdot\left(1\otimes(1-z_1'-z_2'
(1-z_1')^{-1}z_2')^{-1}\right),
\end{multline*}
\begin{multline*}
\Delta_{1}(r_3)\sim(z_2^{-1}\otimes
1)\left(1\otimes(z'_2-(1-z'_1){z'_2}^{-1}(1-z'_1))^{-1}\right)\\
+\left(z_2^{-1}(1-z_1)\otimes
1\right)\left((z_2-(1-z_1)z_2^{-1}(1-z_1))^{-1}\otimes 1\right)\\
\cdot\left(1\otimes {z_2'}^{-1}(1-z_1')+(1-z_1)z_2^{-1}\otimes 1\right)\\
\cdot\left(1\otimes(z_2'-(1-z'_1){z'_2}^{-1}(1-z'_1))^{-1}\right),
\end{multline*}
\begin{multline*}
\Delta_{2}(r_3)\sim\left(z_2^{-1}\otimes 1\right)\left(1\otimes
{z_2'}^{-1}\right)
\left(1\otimes(1-z_1')(z'_2-(1-z'_1){z'_2}^{-1}(1-z'_1))^{-1}\right)\\
+\left(z_2^{-1}(1-z_1)\otimes
1\right)\left((z_2-(1-z_1)z_2^{-1}(1-z_1))^{-1}\otimes 1\right)\\
\cdot\left(1\otimes 1+\left((1-z_1)z_2^{-1}\otimes
1\right)\left(1\otimes  {z_2'}^{-1}(1-z_1')\right)\right)\\
\cdot\left(1\otimes(z_2'-(1-z'_1){z'_2}^{-1}(1-z'_1))^{-1}\right).
\end{multline*}
\end{ex}
\begin{ex}\label{ex:realiz}
Consider a $p\times q$ matrix-valued noncommutative rational
expression
\begin{equation}\label{eq:rsr}
R=C(I_m-A_1z_1-\cdots -A_dz_d)^{-1}B,
\end{equation}
where $m\in\mathbb{N}$, $A_j\in\mat{\field}{m}$ ($j=1,\ldots,d$),
$B\in\rmat{\field}{m}{q}$, $C\in\rmat{\field}{p}{m}$. (This is a
recognizable series realization of a $p\times q$ matrix-valued
noncommutative rational function; see
\cite{Kle,Schutz61,Fliess74a}.) We have
\begin{multline*}
\Delta_j(R)=(C\otimes 1)\cdot\left((I_m-A_1z_1-\cdots
-A_dz_d)^{-1}\otimes
1\right)\cdot A_j(1\otimes 1)\\
\cdot\left( 1\otimes (I_m-A_1z'_1-\cdots
-A_dz'_d)^{-1}\right)\cdot(1\otimes B),
\end{multline*}
or up to trivial equivalences,
\begin{multline}\label{eq:rsr_dif-lr}
\Delta_j(R)\sim\left(C(I_m-A_1z_1-\cdots -A_dz_d)^{-1}A_j\otimes
1\right)\\
\cdot\left( 1\otimes (I_m-A_1z'_1-\cdots -A_dz'_d)^{-1}B\right)\\
\sim\left(C(I_m-A_1z_1-\cdots -A_dz_d)^{-1}\otimes 1\right)
\cdot\left( 1\otimes A_j(I_m-A_1z'_1-\cdots
-A_dz'_d)^{-1}B\right).
\end{multline}
\end{ex}

It is not \emph{a priori} clear from Definition \ref{dfn:rdif}
that $\Delta_j$ preserves the equivalence of matrix-valued
noncommutative rational expressions and can be thus defined on
matrix-valued noncommutative rational functions. This is a
consequence of the following key theorem that relates the
evaluation $\Delta_j(R)(Z,Z')$ to the evaluation of $R$ on
$d$-tuples of $2\times 2$ block upper triangular matrices with
$Z_j$ and $Z'_j$ on block diagonals. We will identify
$\mat{\field}{n} \otimes \mat{\field}{n'}$ with
$\hom(\rmat{\field}{n}{n'}, \rmat{\field}{n}{n'})$, so that
$\sum_i A_i \otimes B_i$ corresponds to the linear mapping $H
\mapsto \sum_i A_i H B_i$. This correspondence extends naturally
to $p\times q$ matrices: we identify
$\rmat{\field}{p}{q}\otimes\mat{\field}{n} \otimes
\mat{\field}{n'}\cong\rmat{(\mat{\field}{n} \otimes
\mat{\field}{n'})}{p}{q}$ with $\hom\left(\rmat{\field}{n}{n'},
\rmat{(\rmat{\field}{n}{n'})}{p}{q}\right)$, so that $\left[\sum_i
A^{(jk)}_i \otimes
B^{(jk)}_i\right]_{j=1,\ldots,p;\,k=1,\ldots,q}$ corresponds to
the linear mapping $H \mapsto \left[\sum_i A^{(jk)}_i H
B^{(jk)}_i\right]_{j=1,\ldots,p;\,k=1,\ldots,q}$. We will use the
$pn\times pn$ permutation matrices
$P(p,n)=[E_{ij}^T]_{i=1,\ldots,p;\,j=1,\ldots,n}$ where each
$E_{ij}\in\rmat{\field}{p}{n}$ has entry $1$ in position $(i,j)$
and all other entries are zero. These matrices allow us to change
the order of factors in tensor products: $A\otimes B=P(n,p)
(B\otimes A)P(m,q)^{\mtrans}$ for any $A\in\rmat{\field}{n}{m}$
and $B\in\rmat{\field}{p}{q}$. See \cite[pages 259--261]{HJ}. We
also use the notation $\dom_nR:=\dom
R\cap\mattuple{\field}{n}{d}$.

\begin{thm}\label{thm:rtriangle}
Let $R$ be a $p\times q$ matrix-valued noncommutative rational
expression. Let $Z\in\dom_n R$ and $Z'\in\dom_{n'} R$, and let
$W=(W_1,\ldots,W_d)\in \rmattuple{\field}{n}{n'}{d}$. Then
$$\begin{bmatrix} Z & W\\ 0 & Z'
\end{bmatrix}=\left(\begin{bmatrix} Z_1 & W_1\\
0 & Z'_1
\end{bmatrix},\ldots,\begin{bmatrix} Z_d & W_d\\
0 & Z'_d
\end{bmatrix}\right)\in\dom_{n+n'} R$$ and
\begin{multline}\label{eq:rtriangle}
P(n+n',p)R\left(\begin{bmatrix} Z & W\\ 0 & Z'
\end{bmatrix}\right)P(n+n',q)^{\mtrans} \\
 =  \begin{bmatrix}
P(n,p)R(Z)P(n,q)^{\mtrans} &
P(n,p)\left(\sum\limits_{j=1}^d\Delta_j(R)(Z,Z')(W_j)\right)P(n',q)^{\mtrans}\\
0 & P(n',p)R(Z')P(n',q)^{\mtrans}
\end{bmatrix}.
\end{multline}
\end{thm}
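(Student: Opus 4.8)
The plan is to prove the statement by structural induction on the way $R$ is built up from matrix-valued polynomials via the five rules of Definition \ref{dfn:rdif}, establishing simultaneously the domain assertion and the identity \eqref{eq:rtriangle}. Throughout, the permutation matrices play a purely bookkeeping role: conjugation by $P(n+n',p)$ on the left and $P(n+n',q)^{\mtrans}$ on the right moves the size-$(n+n')$ tensor factor to the outside, so that the block upper triangular structure coming from the decomposition $\field^{n+n'}=\field^n\oplus\field^{n'}$ becomes manifest. In this reordered picture the theorem simply says that $R$ evaluated on the block triangular tuple is itself block upper triangular, with diagonal blocks $R(Z)$ and $R(Z')$ and upper right corner $\sum_j\Delta_j(R)(Z,Z')(W_j)$. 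I would first record that, because of the swap identity $A\otimes B=P(n,p)(B\otimes A)P(m,q)^{\mtrans}$ and the relation $P(N,q)^{\mtrans}P(N,q)=I$, this conjugation is compatible with sums, products, inverses, and block formation, so that in each inductive step the permutations cancel in pairs and never have to be manipulated individually.

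First I would treat the base case of a matrix-valued noncommutative polynomial. For a single monomial $z^w$ with $w=g_{i_\ell}\cdots g_{i_1}$, substituting the blocks $\begin{bmatrix} Z_i & W_i\\ 0 & Z'_i\end{bmatrix}$ and multiplying out yields a block upper triangular matrix with diagonal blocks $Z^w$ and $Z'^w$ and upper right corner the telescoping sum $\sum_{k=1}^\ell Z_{i_\ell}\cdots Z_{i_{k+1}}W_{i_k}Z'_{i_{k-1}}\cdots Z'_{i_1}$, in which exactly one factor at a time is replaced by its off diagonal block. Regrouping this sum according to the letter $g_{i_k}=g_j$ sitting at position $k$, that is, writing $w=ug_jv$, reproduces exactly $\sum_{j}\Delta_j(z^w)(Z,Z')(W_j)$ once $z^u\otimes z^{\prime v}$ is read as the linear map $H\mapsto Z^uHZ^{\prime v}$. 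Linearity in the coefficients $P_w$ then gives the general polynomial, including the tensor reordering encoded by the permutations.

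Next I would check that each of the rules (2)--(5) preserves the block triangular form and transforms the off diagonal corner exactly as $\Delta_j$ prescribes. Sums are immediate. For a product $R_1R_2$, multiplying the two triangular evaluations produces the corner $R_1(Z)D_2+D_1R_2(Z')$, matching $\Delta_j(R_1R_2)=\Delta_j(R_1)(1\otimes R_2)+(R_1\otimes 1)\Delta_j(R_2)$ once $1\otimes R_2$ and $R_1\otimes 1$ are read as right and left multiplication by $R_2(Z')$ and $R_1(Z)$. For an inverse $R^{-1}$, the inductive hypothesis puts $R$ on the block tuple in triangular form with diagonal blocks $R(Z),R(Z')$; since $Z,Z'\in\dom R^{-1}$ these blocks are invertible, hence the whole block triangular matrix is invertible and the block tuple lies in $\dom_{n+n'}R^{-1}$, which settles the domain claim in this case. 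The block triangular inversion formula then gives the corner $-R(Z)^{-1}D\,R(Z')^{-1}$, matching $\Delta_j(R^{-1})=-(R^{-1}\otimes 1)\Delta_j(R)(1\otimes R^{-1})$. Rule (5) is assembled from the entrywise statements together with one outer permutation to restore the $p\times q$ indexing.

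The conceptual core is the monomial computation, but I expect the main technical obstacle to be the consistent tracking of the permutations $P(\cdot,\cdot)$ through the product, inverse, and especially the block formation step, where several tensor factors of differing sizes must be reordered at once, and where one must also be careful that the off diagonal corner has mixed size $n\times n'$ (whence the two distinct outer permutations $P(n,p)$ and $P(n',q)$ in \eqref{eq:rtriangle}). The cleanest way to control this is to fix the convention once and for all that every identity is first proved with the matrix-size factor outermost and only afterward conjugated back, so that the permutations disappear in matched pairs at every stage rather than being carried through the computation.
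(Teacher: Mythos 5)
Your proposal is correct and follows essentially the same route as the paper's proof: structural induction on the expression, with the monomial telescoping computation as the base case, the Leibniz rule and block-triangular inversion formula for the product and inverse steps, and permutation bookkeeping (including the composition identity for the $P(\cdot,\cdot)$ matrices) to handle the block-formation rule. The one place the paper supplies detail you only sketch is rule (5), where it reduces to showing $I_s\otimes R$ inherits \eqref{eq:rtriangle} via $R(X\otimes I_s)=R(X)\otimes I_s$ and $P(m,sp)=P(ms,p)P(mp,s)$ --- precisely the ``reorder once, compute with the size factor outermost, reorder back'' strategy you propose, so your plan fills in without obstruction.
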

\begin{proof}
We establish \eqref{eq:rtriangle} recursively; the reasoning will
also imply that
$\begin{bmatrix} Z & W\\
0 & Z'
\end{bmatrix}\in\dom_{n+n'}R$.

We first verify that \eqref{eq:rtriangle} holds when $R$ is a
matrix-valued noncommutative polynomial
$P=\sum_{w\in\free_d}P_wz^w$.
\begin{multline*}
P(n+n',p)P\left(\begin{bmatrix} Z & W\\ 0 & Z'
\end{bmatrix}\right)P(n+n',q)^{\mtrans} =\sum_{w\in\free_d}\begin{bmatrix} Z & W\\ 0 & Z'
\end{bmatrix}^w\otimes P_w\\
=\sum_{\ell\in\mathbb{Z}_+,\,i_1,\ldots,i_\ell\in\{1,\ldots,
d\}}\begin{bmatrix} Z_{i_1} & W_{i_1}\\
0 & Z'_{i_1}
\end{bmatrix}\cdots\begin{bmatrix} Z_{i_\ell} & W_{i_\ell}\\
0 & Z'_{i_\ell}
\end{bmatrix}\otimes P_{g_{i_1}\cdots g_{i_\ell}}\\
=\sum_{\ell,\,i_1,\ldots,i_\ell}\begin{bmatrix} Z_{i_1}\cdots
Z_{i_\ell} & \sum\limits_{k=1}^\ell Z_{i_1}\cdots
Z_{i_{k-1}}W_{i_k}Z'_{i_{k+1}}\cdots Z'_{i_\ell}
\\ 0
& Z'_{i_1}\cdots Z'_{i_\ell}
\end{bmatrix}\otimes P_{g_{i_1}\cdots
g_{i_\ell}}\\
=\small\begin{bmatrix}
\sum\limits_{\ell,\,i_1,\ldots,i_\ell}Z_{i_1}\cdots
Z_{i_\ell}\otimes P_{g_{i_1}\cdots g_{i_\ell}} &
\sum\limits_{\ell,\,i_1,\ldots,i_\ell}\sum\limits\limits_{k=1}^\ell
Z_{i_1}\cdots Z_{i_{k-1}}W_{i_k}Z'_{i_{k+1}}\cdots
Z'_{i_\ell}\otimes P_{g_{i_1}\cdots g_{i_\ell}}
\\ 0
& \sum\limits_{\ell,\,i_1,\ldots,i_\ell}Z'_{i_1}\cdots
Z'_{i_\ell}\otimes P_{g_{i_1}\cdots g_{i_\ell}}
\end{bmatrix}\\
=\begin{bmatrix} \sum\limits_wZ^w\otimes P_w &
\sum\limits_w\sum\limits_{j=1}^d\,\sum\limits_{u,v\colon
w=ug_jv}Z^uW_jZ'^v\otimes P_w\\
0 & \sum\limits_wZ'^w\otimes P_w
\end{bmatrix} \\
=\begin{bmatrix} P(n,p)P(Z)P(n,q)^{\mtrans} &
P(n,p)\left(\sum\limits_{j=1}^d\Delta_j(P)(Z,Z')(W_j)\right)P(n',q)^{\mtrans}
\\ 0
&  P(n',p)P(Z')P(n',q)^{\mtrans}
\end{bmatrix}.
\end{multline*}

If \eqref{eq:rtriangle} is true for $R_1$ and for $R_2$ then it is
clearly true for $R=R_1+R_2$.

Now, assume that \eqref{eq:rtriangle} is true for a $p\times q$
matrix-valued noncommutative rational expression $R_1$ and for a
$q\times r$ matrix-valued noncommutative rational expression
$R_2$. Then
\begin{multline*}
P(n+n',p)(R_1R_2)\left(\begin{bmatrix} Z & W\\ 0 & Z'
\end{bmatrix}\right)P(n+n',r)^{\mtrans}\\
=P(n+n',p)R_1\left(\begin{bmatrix} Z & W\\
0 & Z'
\end{bmatrix}\right)P(n+n',q)^{\mtrans}P(n+n',q)R_2\left(\begin{bmatrix} Z & W\\
0 & Z'
\end{bmatrix}\right)P(n+n',r)^{\mtrans}\\
=\begin{bmatrix} P(n,p)R_1(Z)P(n,q)^{\mtrans} &
P(n,p)\left(\sum\limits_{j=1}^d\Delta_j(R_1)(Z,Z')(W_j)\right)P(n',q)^{\mtrans}\\
0 & P(n',p)R_1(Z')P(n',q)^{\mtrans}
\end{bmatrix}\\
\cdot
\begin{bmatrix}
 P(n,q)R_2(Z)P(n,r)^{\mtrans} &
 P(n,q)\left(\sum\limits_{j=1}^d\Delta_j(R_2)(Z,Z')(W_j)\right)P(n',r)^{\mtrans}\\
0 & P(n',q)R_2(Z')P(n',r)^{\mtrans}
\end{bmatrix}\\
 = \left[\begin{matrix}
P(n,p)R_1(Z)R_2(Z)P(n,r)^{\mtrans}\\
 0
 \end{matrix}\right.\mbox{\hspace{10cm}} \\
\left.\begin{matrix} P(n,p)
\sum\limits_{j=1}^d\left(R_1(Z)\Delta_j(R_2)(Z,Z')(W_j)+
\Delta_j(R_1)(Z,Z')(W_j)R_2(Z')\right)P(n',r)^{\mtrans}\\
 P(n',p)R_1(Z')R_2(Z')P(n',r)^{\mtrans}
\end{matrix}\right]
\\
=\begin{bmatrix} P(n,p)(R_1R_2)(Z)P(n,r)^{\mtrans} &
 P(n,p)\left(\sum\limits_{j=1}^d\Delta_j(R_1R_2)(Z,Z')(W_j)\right)P(n',r)^{\mtrans}\\
0 & P(n',p)(R_1R_2)(Z')P(n',r)^{\mtrans}
\end{bmatrix}.
\end{multline*}
 Thus, \eqref{eq:rtriangle} is true for the
$p\times r$ matrix-valued noncommutative rational expression
$R_1R_2$.

Next, assume that \eqref{eq:rtriangle} is true for a $p\times p$
matrix-valued noncommutative rational expression $R$  which is not
identically singular. Then
\begin{multline*}
P(n+n',p)R^{-1}\left(\begin{bmatrix} Z & W\\ 0 & Z'
\end{bmatrix}\right)P(n+n',p)^{\mtrans}\\
= \left(P(n+n',p)R\left(\begin{bmatrix} Z & W\\
0 & Z'
\end{bmatrix}\right)P(n+n',p)^{\mtrans}\right)^{-1}\\
= \begin{bmatrix}
P(n,p)R(Z)P(n,p)^{\mtrans} &
P(n,p)\left(\sum\limits_{j=1}^d\Delta_j(R)(Z,Z')(W_j)\right)P(n',p)^{\mtrans}\\
0 & P(n',p)R(Z')P(n',p)^{\mtrans}
\end{bmatrix}^{-1}
\\
=\left[\begin{matrix} P(n,p)R(Z)^{-1}P(n,p)^{\mtrans}\\
0 \end{matrix}\right.\hfill \\
\left.\begin{matrix} -P(n,p)R(Z)^{-1}\left(
\sum\limits_{j=1}^d\Delta_j(R)(Z,Z')(W_j)\right)R(Z')^{-1}P(n',p)^{\mtrans}\\
P(n',p)R(Z')^{-1}P(n',p)^{\mtrans}
\end{matrix}\right]\\
=\begin{bmatrix} P(n,p)R^{-1}(Z)P(n,p)^{\mtrans} &
 P(n,p)\left(\sum\limits_{j=1}^d\Delta_j(R^{-1})(Z,Z')(W_j)\right)P(n',p)^{\mtrans}\\
0 & P(n',p)R^{-1}(Z')P(n',p)^{\mtrans}
\end{bmatrix}.
\end{multline*}
Thus, \eqref{eq:rtriangle} is true for the matrix-valued
noncommutative rational expression $R^{-1}$.

Finally, we will show that if \eqref{eq:rtriangle} is true for
$R_{ab},\ a=1,\ldots,p_2,\ b=1,\ldots, q_2$, then it is also true
for $R=[R_{ab}]$. Clearly, it suffices to prove
\eqref{eq:rtriangle}  for the case where only one block $R_{ab}$
is nonzero, i.e., $R=E_{ab}\otimes R_{ab}$, with some $a,b$ and
$E_{ab}$ a $p_2\times q_2$ matrix with $1$ at the $(a,b)$ position
and $0$ elsewhere. Since $(E_{ab}\otimes R_{ab})(X)=(E_{ab}\otimes
I_{p_1})\cdot(I_{q_2}\otimes R_{ab})(X)$, this boils down to
proving \eqref{eq:rtriangle} for $I_{q_2}\otimes R_{ab}$.
Simplifying the notation, we can state the problem as follows:
show that if a $p\times q$ matrix-valued noncommutative rational
expression $R$ satisfies \eqref{eq:rtriangle}, then so does the
$sp\times sq$ matrix-valued noncommutative rational expression
$I_{s}\otimes R$ for every $s\in\mathbb{N}$. We first observe that
for any $m,s\in\mathbb{N}$ and any $X\in\dom_mR$ one has $X\otimes
I_s\in\dom_{ms}R$ and $R(X)\otimes I_s=R(X\otimes I_s)$ --- this
follows directly from the definition of matrix-valued
noncommutative rational expressions. Second, using this
observation and the identities of the form
$$P(m,sp)=P(ms,p)P(mp,s)$$
(see \cite[Problem 20, page 266]{HJ}), we obtain that
\begin{multline*}
P(m,sp)(I_s\otimes
R)(X)P(m,sq)^{\mtrans}\\
=P(ms,p)P(mp,s)(I_s\otimes
R(X))P(mq,s)^{\mtrans}P(ms,q)^{\mtrans}\\
=P(ms,p)R(X\otimes I_s)P(ms,q)^{\mtrans}.
\end{multline*}
 Then we have
\begin{multline*}
P(n+n',sp)(I_s\otimes R)\left(\begin{bmatrix} Z & W\\
0 & Z'
\end{bmatrix}\right)P(n+n',sq)^{\mtrans}\\
=P((n+n')s,p)R\left(\begin{bmatrix} Z\otimes I_s & W\otimes I_s\\
0 & Z'\otimes I_s
\end{bmatrix}\right)P((n+n')s,q)^{\mtrans}\\
=\left[ \begin{matrix} P(ns,p)R(Z\otimes I_s)P(ns,q)^{\mtrans}\\
0
\end{matrix}\right.\hfill \\
 \left.\begin{matrix}
P(ns,p)\left(\sum_{j=1}^d\Delta_j(R)(Z\otimes I_s,Z'\otimes
I_s)(W_j\otimes I_s)\right)P(n's,q)^{\mtrans}\\
P(n's,p)R(Z'\otimes I_s)P(n's,q)^{\mtrans}
\end{matrix}\right]\\
=\left[ \begin{matrix} P(n,sp)(I_s\otimes R)(Z)P(n,sq)^{\mtrans}
\\
0
\end{matrix}\right. \hfill \\
\left. \begin{matrix}
P(n,sp)\left(\sum_{j=1}^d\Delta_j(I_s\otimes R)(Z,Z')(W_j)\right)P(n',sq)^{\mtrans}\\
P(n',sp)(I_s\otimes R)(Z')P(n',sq)^{\mtrans}
\end{matrix}\right],
\end{multline*}
i.e., $I_s\otimes R$ satisfies \eqref{eq:rtriangle}.

The proof is complete.
\end{proof}
\begin{rem}\label{rem:corr}
The equality \eqref{eq:rtriangle} is the accurate statement of the
fact that matrix-valued noncommutative rational expressions
respect a block triangular matrix structure, in particular direct
sums --- compare \cite[formulae (2.3), (2.5), (2.12), (2.13),
(2.19)]{KVV}, where the permutation matrices are missing. (This
does not affect any subsequent arguments there.)
\end{rem}
\begin{cor}\label{cor:redom}
For any matrix-valued noncommutative rational expression $R$,
$$\edom \Delta_j(R)\supseteq \edom R\times\edom R.$$
\end{cor}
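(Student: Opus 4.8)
The plan is to combine Theorem~\ref{thm:rtriangle} with the fact that a noncommutative rational function, being a noncommutative function on its extended domain of regularity, respects direct sums. Fix sizes \(n,n'\); we may assume \(\edom_n R\neq\emptyset\neq\edom_{n'}R\), so that \(n,n'\in\mathcal{N}_R\) (otherwise the right-hand side is empty). Let \(T\), \(T'\) and \(W\) be generic matrices of sizes \(n\times n\), \(n'\times n'\) and \(n\times n'\), with independent commuting entries, and set \(\tilde T=\left(\begin{bmatrix}T_i&W_i\\0&T_i'\end{bmatrix}\right)_{i=1,\dots,d}\). Applying Theorem~\ref{thm:rtriangle} over the field of rational functions in these entries shows first that \(n+n'\in\mathcal{N}_R\), and second that, up to the fixed permutation matrices, \[R(\tilde T)=\begin{bmatrix}R(T)&\sum_{j=1}^d\Delta_j(R)(T,T')(W_j)\\0&R(T')\end{bmatrix}.\] Thus the entries of \(\Delta_j(R)(T,T')\) occur as the coefficients of the entries of \(W_j\) in the \((1,2)\) block of \(R(\tilde T)\). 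Consequently it suffices to prove that the rational function \(R(\tilde T)\), in the entries of \(T,T',W\), is regular at every point with \(T=Z\in\edom_n R\) and \(T'=Z'\in\edom_{n'}R\); its \((1,2)\) block, hence each \(\Delta_j(R)(T,T')\), will then be regular at \((Z,Z')\), which is exactly \((Z,Z')\in\edom\Delta_j(R)\).

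The displayed formula already does the bookkeeping. Each entry of \(R(\tilde T)\) is a rational function whose denominator involves only the entries of \(T\) or only those of \(T'\): the two diagonal blocks are \(R(T)\) and \(R(T')\), while every entry of the off-diagonal block is a \((T,T')\)-rational linear combination of the entries of \(W\). Hence the reduced denominator of \(R(\tilde T)\) does not involve \(W\), so \(R(\tilde T)\) is regular at \((Z,W,Z')\) for all \(W\) as soon as it is regular for one value of \(W\); moreover this is equivalent to each coefficient \(\Delta_j(R)(T,T')\) being regular at \((Z,Z')\). Taking \(W=0\), the tuple \(\tilde T\) specializes to the direct sum, and since \(R(\tilde T)\) is obtained from the size \(n+n'\) generic evaluation of \(R\) by the block-triangular substitution, regularity at this value pulls back from regularity of that evaluation at \(Z\oplus Z'\). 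It therefore remains to show the single statement \(Z\oplus Z'\in\edom_{n+n'}R\).

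This last point is the crux, and it is where the passage from the ordinary domain of Theorem~\ref{thm:rtriangle} to the extended domain must be paid for. The difficulty is genuine: because \(\edom_n R\) is governed by the \emph{coprime} denominator of the generic evaluation of \(R\), it is in general strictly larger than the locus where the intermediate inverses occurring in \(R\) exist (for \(R=z_1z_1^{-1}\) one has generic evaluation \(\equiv I_n\) and \(\edom_n R\) is everything, although \(\det T_1\) is inverted along the way). Thus one cannot simply invoke that the inverses in the evaluation at \(Z\oplus Z'\) exist, and a naive induction on the construction of \(R\)---which would force each inverted subexpression to be nonsingular at \(Z\) and at \(Z'\)---breaks down, the regularity at \(Z\) being an emergent cancellation. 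To resolve it I would argue at the level of the noncommutative rational function \(\mathfrak R\) represented by \(R\): since \(n+n'\in\mathcal{N}_R\) we have \(\edom_{n+n'}R=\edom_{n+n'}\mathfrak R\), and \(\mathfrak R\) is a noncommutative function on \(\edom\mathfrak R\), which is therefore closed under direct sums (the basic compatibility property of noncommutative functions recalled in the introduction; see \cite{ncfound,KVV}). As \(Z\in\edom_n R\subseteq\edom_n\mathfrak R\) and \(Z'\in\edom_{n'}R\subseteq\edom_{n'}\mathfrak R\), direct sum closure gives \(Z\oplus Z'\in\edom_{n+n'}\mathfrak R=\edom_{n+n'}R\), completing the argument. (Over a finite field one first passes to an infinite extension, as in the proof of Proposition~\ref{prop:nc-invert}.)
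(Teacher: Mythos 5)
Your first two paragraphs are exactly the paper's intended argument, only written out in full: the paper's entire proof is the sentence ``substitute generic matrices $T_1,\ldots,T_d$ and $T'_1,\ldots,T'_d$ into \eqref{eq:rtriangle} --- the details are similar to \cite[Corollary 2.12]{KVV}.'' Your bookkeeping is correct and worth having: since \eqref{eq:rtriangle} holds on $\dom_nR\times\dom_{n'}R$ (Zariski dense for an infinite field, with the finite-field case handled by extension as in Proposition \ref{prop:nc-invert}), it persists as an identity of rational functions in the entries of $T,W,T'$; the $(1,2)$ block is linear in $W$, so its reduced denominator does not involve $W$ and regularity of the coefficients $\Delta_j(R)(T,T')$ at $(Z,Z')$ follows from regularity of the block-triangular evaluation of $R$ at a single value of $W$; and regularity of a restriction pulls back from regularity of $R_{n+n'}$ at the triangular point. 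You have also correctly isolated the crux, namely that $Z\oplus Z'$ (or the triangular point) must lie in $\edom_{n+n'}R$, and correctly explained why a naive structural induction fails --- the extended domain is governed by coprime denominators of the generic evaluation, and cancellations are emergent.

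The gap is in how you discharge that crux. You assert that $\mathfrak R$ ``is a noncommutative function on $\edom\mathfrak R$,'' hence $\edom\mathfrak R$ is closed under direct sums, citing \cite{ncfound,KVV}. This is circular as written: a noncommutative function is, by definition, a function on a set that is \emph{already assumed} closed under direct sums, and the compatibility recalled in the introduction is part of that definition, not a theorem producing closure of a given domain. That $\edom\mathfrak R$ is such a set is precisely the nontrivial fact at stake, and \cite{ncfound} (general theory on given noncommutative sets) does not supply it. Moreover, passing from $R$ to $\mathfrak R$ gains nothing at a fixed size: for an infinite field, any two representatives of $\mathfrak R$ with $m\in\mathcal{N}$ have generic evaluations that agree on the dense intersection of their domains, hence are equal as rational matrix functions, so $\edom_m\mathfrak R=\edom_m R$ whenever $m\in\mathcal{N}_R$ (this also shows your per-size use of the paper's remark, stated there under the proviso $\mathcal{N}_R=\mathcal{N}_{\mathfrak R}$, is harmless). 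So the direct-sum closure must be proved for the expression $R$ itself; that closure of the extended domain under direct sums and block-triangular points is exactly the content of \cite[Corollary 2.12]{KVV}, which is the result the paper's one-line proof defers to. If at the crux you replace the appeal to noncommutative-function compatibility by an explicit citation of (or a reproduction of the generic-matrix proof of) \cite[Corollary 2.12]{KVV}, your argument is complete and coincides with the paper's.
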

The proof is obtained by substituting generic matrices $T_1$,
\ldots, $T_d$ and $T'_1$, \ldots, $T_d'$ into \eqref{eq:rtriangle}
-- the details are similar to \cite[Corollary 2.12]{KVV}.

\begin{cor}\label{cor:requiv}
If $R_1$ and $R_2$ are two equivalent matrix-valued rational
expressions then $\Delta_j(R_1)$ and $ \Delta_j(R_2)$ are also
equivalent.
\end{cor}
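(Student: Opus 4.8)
The plan is to deduce the corollary directly from Theorem~\ref{thm:rtriangle}, reading off the equality of the $\Delta_j$ from the off-diagonal block of the block-triangular evaluation formula~\eqref{eq:rtriangle}. First I would dispose of the domain bookkeeping. Since $\dom\Delta_j(R)=\dom R\times\dom R$ for every matrix-valued noncommutative rational expression $R$, we have $\dom\Delta_j(R_1)\cap\dom\Delta_j(R_2)=(\dom R_1\cap\dom R_2)\times(\dom R_1\cap\dom R_2)$, which is nonempty because $R_1$ and $R_2$ are equivalent. Thus it suffices to check that $\Delta_j(R_1)$ and $\Delta_j(R_2)$ take the same value at each point of this product set.

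Next, fix $Z\in\dom_n R_1\cap\dom_n R_2$ and $Z'\in\dom_{n'} R_1\cap\dom_{n'} R_2$, together with an arbitrary tuple $W=(W_1,\ldots,W_d)\in\rmattuple{\field}{n}{n'}{d}$. By Theorem~\ref{thm:rtriangle}, the block upper triangular tuple $\begin{bmatrix} Z & W\\ 0 & Z'\end{bmatrix}$ lies in $\dom_{n+n'}R_1\cap\dom_{n+n'}R_2$, hence in $\dom R_1\cap\dom R_2$; since $R_1$ and $R_2$ are equivalent, they agree on it. Conjugating both sides by $P(n+n',p)$ and $P(n+n',q)^{\mtrans}$ and comparing with~\eqref{eq:rtriangle}, the diagonal blocks merely reproduce the known equalities $R_1(Z)=R_2(Z)$ and $R_1(Z')=R_2(Z')$, while the off-diagonal block yields
$$\sum_{j=1}^d\Delta_j(R_1)(Z,Z')(W_j)=\sum_{j=1}^d\Delta_j(R_2)(Z,Z')(W_j).$$

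The only step that calls for a moment's thought---and which I regard as the crux---is passing from this identity for the sum to the equality of the individual operators. Here I would exploit the freedom in $W$: fixing $j$ and taking $W_i=0$ for $i\neq j$ collapses the sum to a single term, so that $\Delta_j(R_1)(Z,Z')(W_j)=\Delta_j(R_2)(Z,Z')(W_j)$ for every $W_j\in\rmat{\field}{n}{n'}$. Under the identification of $\rmat{\field}{p}{q}\otimes\mat{\field}{n}\otimes\mat{\field}{n'}$ with $\hom(\rmat{\field}{n}{n'},\rmat{(\rmat{\field}{n}{n'})}{p}{q})$ fixed before Theorem~\ref{thm:rtriangle}, two tensor elements inducing the same linear map are equal; hence $\Delta_j(R_1)(Z,Z')=\Delta_j(R_2)(Z,Z')$. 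As $n$, $n'$, $Z$, and $Z'$ were arbitrary in the relevant domains, the two expressions \emph{agree} on all of $\dom\Delta_j(R_1)\cap\dom\Delta_j(R_2)$, which together with its nonemptiness established above gives the asserted equivalence.
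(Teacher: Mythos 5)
Your proposal is correct and follows exactly the route the paper intends: the paper states that the corollary is ``immediate from \eqref{eq:rtriangle}'', and your argument is precisely the fleshed-out version of that deduction, including the two details the paper leaves tacit --- isolating each $\Delta_j$ by choosing $W$ with a single nonzero component, and passing from equality of the induced linear maps to equality in $\rmat{\field}{p}{q}\otimes\mat{\field}{n}\otimes\mat{\field}{n'}$ via the injectivity of the identification fixed before Theorem \ref{thm:rtriangle}.
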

The proof is immediate from \eqref{eq:rtriangle}.

Corollary \ref{cor:requiv} and Theorem \ref{thm:ncrat2} allow us
to define  $\Delta_j\mathfrak{R}$ for a matrix-valued
noncommutative rational function $\mathfrak{R}$:
$\Delta_j\mathfrak{R}$ is the matrix over
$\skfield\otimes\skfield$ corresponding to the equivalence class
of $\Delta_j(R)$ for any
 $R\in\mathfrak{R}$. We have $$ \dom\Delta_j\mathfrak{R}\supseteq
\dom\mathfrak{R}\times\dom\mathfrak{R},\qquad
\edom\Delta_j\mathfrak{R}\supseteq
\edom\mathfrak{R}\times\edom\mathfrak{R},$$ where the second
inclusion follows from Corollary \ref{cor:redom}.
\begin{rem}\label{rem:doms-inclus}
We emphasize that while we always have equality
$\dom\Delta_j(R)=\dom(R)\times\dom(R)$ for a matrix-valued
noncommutative rational expression $R$, we may have strict
inclusion $\dom\Delta_j\mathfrak{R}\supsetneq
\dom\mathfrak{R}\times\dom\mathfrak{R}$. As an example, let
$r=z_1^{-1}$. Then we have $$\Delta_2r=-(z_1^{-1}\otimes 1)\cdot
0(1\otimes 1)\cdot(1\otimes z_1^{-1})\sim 0,$$ and for the
corresponding noncommutative rational function $\mathfrak{r}$ we
have
$$\dom\Delta_2\mathfrak{r}=\coprod_{n=1}^\infty
\mattuple{\field}{n}{2},$$ while
$$\dom\mathfrak{r}=\{(Z_1,Z_2)\colon\det Z_1\neq 0\}.$$
This example also shows that we have a strict inclusion
$$\edom(\Delta_2r)\supsetneq\edom(r)\times\edom(r).$$
\end{rem}

We proceed to describe some of the many facets of the
noncommutative difference-differential operators.

\subsection{Directional derivatives.}\label{s:dir-der} Evaluating
$\Delta_j(R)$, $j=1,\ldots,d$, at $Z'=Z$ yields the differential
or the directional derivatives of a matrix-valued noncommutative
rational expression $R$ at $Z$:
\begin{equation}\label{eq:dir-der}
\sum_{j=1}^d\Delta_j(R)(Z,Z)(W_j)=\frac{d}{dt}R(Z+tW)\big|_{t=0}.
\end{equation}
Here $Z\in\dom R$ and we view $R(Z+tW)$ as a matrix-valued
rational function in the indeterminate $t$. The formula
\eqref{eq:dir-der} follows easily from the recursive definition of
$\Delta_j$ and the corresponding properties of the derivative of a
matrix-valued rational function in one indeterminate. Of course,
one can also write the analogue of \eqref{eq:dir-der} for
matrix-valued noncommutative rational functions.

\subsection{Backward shifts.}\label{s:bshifts} Let $R$ be a matrix-valued noncommutative
rational expression which is regular at zero, i.e., $0\in\dom_1R$.
Then
\begin{equation}\label{eq:bshifts}
\Delta_j(R)(Z,0)=\rs_j R(Z),\quad \Delta_j(R)(0,Z)=\ls_j R(Z),
\end{equation}
where $\rs_j$ and $\ls_j$ are the right and left backward shift
operators introduced in \cite{KVV}. The formula \eqref{eq:bshifts}
follows easily from the recursive definitions of $\Delta_j$ and of
the backward shifts. Of course, one can also write the analogue of
\eqref{eq:bshifts} for matrix-valued noncommutative rational
functions. As an  illustration of the action of backward shifts,
for the polynomial $p$ of Example \ref{ex:poly} we have
$$\rs_1p=\beta+\delta z_1+\zeta z_2,\quad \ls_1p=\beta +\delta
z_1+\epsilon z_2,$$
$$\rs_2p=\gamma+\epsilon z_1+\eta z_2,\quad \ls_2p=\gamma +\zeta
z_1+\eta z_2,$$ and for the recognizable series realization
\eqref{eq:rsr} of Example \ref{ex:realiz}, we have
$$\rs_jR=C(I_m-A_1z_1-\cdots -A_dz_d)^{-1}A_jB,\quad \ls_jR=CA_j(I_m-A_1z_1-\cdots
-A_dz_d)^{-1}B.$$

\subsection{Finite difference formulae.}\label{s:fin-dif}
For a $p\times q$ matrix-valued noncommutative rational expression
$R$, and for $Z^{(0)},Z\in\dom_nR$, we have the noncommutative
finite difference formula
$$R(Z)-R\left(Z^{(0)}\right)=\sum_{j=1}^d\Delta_j(R)\left(Z^{(0)},Z\right)\left(Z_j-Z^{(0)}_j\right),$$
which can also be proved recursively, and extends naturally to
matrix-valued noncommutative rational functions.

\subsection{Higher order difference-differential
operators.}\label{s:higher} We can iterate the
differ\-ence-differential operators $\Delta_j$: we define a linear
mapping
$$\Delta_j\colon \skfield^{\otimes \ell}\to
\skfield^{\otimes(\ell+1)}$$ by its action on pure tensors as
$$\Delta_j(\mathfrak{r}_1\otimes\cdots\otimes\mathfrak{r}_\ell)=
\mathfrak{r}_1\otimes\cdots\otimes\mathfrak{r}_{\ell-1}\otimes\Delta_j\mathfrak{r}_\ell.$$
It is easy to check that $\Delta_j$ satisfies the following
version of the Leibniz rule:
\begin{equation}\label{eq:Leibniz-ell}
\Delta_j(\mathfrak{r}\mathfrak{r'})=\Delta_j\mathfrak{r}\cdot
\iota_\ell\mathfrak{r}'+(\mathfrak{r}\otimes
1)\cdot\Delta_j\mathfrak{r}',
\end{equation}
for all $\mathfrak{r}$, $\mathfrak{r}'\in\skfield^{\otimes\ell}$,
where the linear mapping $\iota_\ell\colon\skfield^{\otimes
\ell}\to\skfield^{\otimes \ell+1}$ (actually, a homomorphism of
$\field$-algebras) is defined on pure tensors by
$$\iota_\ell(\mathfrak{r}_1\otimes\cdots\otimes\mathfrak{r}_{\ell-1}\otimes\mathfrak{r}_{\ell})=
\mathfrak{r}_1\otimes\cdots\otimes\mathfrak{r}_{\ell-1}\otimes1\otimes\mathfrak{r}_{\ell}.$$
Applying \eqref{eq:Leibniz-ell} to both sides of the identity
$\mathfrak{r}\mathfrak{r}^{-1}=1$, we obtain that
\begin{equation}\label{eq:inv-dif-ell}
\Delta_j(\mathfrak{r}^{-1})=-(\mathfrak{r}^{-1}\otimes
1)\cdot\Delta_j\mathfrak{r}\cdot\iota_\ell(\mathfrak{r}^{-1}).
\end{equation}
We extend $\Delta_j$ and $\iota_\ell$ entrywise to matrices.

 We can now define, for a
word $w=g_{i_\ell}\cdots g_{i_1}$ of length $\ell$, the
corresponding higher-order difference-differential operators
$$\Delta^w:=\Delta_{i_\ell}\cdots\Delta_{i_1}\colon\rmat{\skfield}{p}{q}\to
\rmat{\left(\skfield^{\otimes(\ell+1)}\right)}{p}{q}.$$

\begin{ex}\label{ex:poly-dif_ell}
For a matrix-valued noncommutative rational function
$\mathfrak{R}$ defined by a matrix-valued noncommutative
polynomial $P=\sum_{v\in\free_d}P_vz^v$ and for a word
$w=g_{i_\ell}\cdots g_{i_1}$ of length $\ell$ at most the total
degree of $P$, we have
$$\Delta^w\mathfrak{R}=\sum_{v\in\free_d}P_v\sum_{v=u_1g_{i_1}u_2g_{i_2}\cdots g_{i_\ell}u_{\ell+1}}
(z^{(1)})^{u_1}\otimes (z^{(2)})^{u_2}\otimes\cdots\otimes
(z^{(\ell+1)})^{u_{\ell+1}}.$$ More precisely, every term in the
second sum on the right-hand side is a tensor product of $\ell+1$
noncommutative rational functions defined by the corresponding
noncommutative monomials; alternatively, the right-hand side (with
some nesting of parentheses) is a matrix-valued noncommutative
rational expression in $\ell$ tuples of indeterminates defining
$\Delta^w\mathfrak{R}$.
\end{ex}
\begin{ex}\label{ex:realiz_dif}
For the matrix-valued noncommutative rational function
$\mathfrak{R}$ defined by a recognizable series realization of the
form \eqref{eq:rsr}, we have by iterating the first equality in
\eqref{eq:rsr_dif-lr} that for every $w=g_{i_\ell}\cdots g_{i_1}$,
\begin{multline*}
\Delta^w\mathfrak{R}=\left(C(I_m-A_1z_1^{(1)}-\cdots-A_dz_d^{(1)})^{-1}A_{i_1}\otimes
\underset{\ell\ {\rm times}}{\underbrace{1\otimes\cdots\otimes 1}}\right)\\
\cdot\prod_{j=2}^\ell\left(\underset{j-1\ {\rm
times}}{\underbrace{1\otimes\cdots\otimes
1}}\otimes(I_m-A_1z_1^{(j)}-\cdots-A_dz_d^{(j)})^{-1}A_{i_j}\otimes\underset{\ell-j+1\
{\rm times}}{\underbrace{1\otimes\cdots\otimes 1}}\right)\\
\cdot\left(\underset{\ell\ {\rm
times}}{\underbrace{1\otimes\cdots\otimes
1}}\otimes(I_m-A_1z_1^{(\ell+1)}-\cdots-A_dz_d^{(\ell+1)})^{-1}B\right).
\end{multline*}
\end{ex}

We identify
$\mat{\field}{n_1}\otimes\cdots\otimes\mat{\field}{n_{\ell+1}}$
with $\ell$-linear mappings
$$\rmat{\field}{n_1}{n_2}\times
\cdots\times\rmat{\field}{n_{\ell}}{n_{\ell+1}}\longrightarrow\rmat{\field}{n_1}{n_{\ell+1}},$$
so that $\sum_i\left(A^{(1)}_i\otimes\cdots\otimes
A^{(\ell+1)}_i\right)$ corresponds to the $\ell$-linear mapping
$$(H_1,\ldots,H_\ell)\longmapsto\sum_iA^{(1)}_iH_1A^{(2)}_i\cdots A^{(\ell)}_iH_\ell
A^{(\ell+1)}_i.$$ This correspondence extends naturally to
matrices. It follows that for a $p\times q$ matrix-valued
noncommutative rational function $\mathfrak{R}$ and for
$Z^{(1)}\in\mat{\field}{n_1}$, \ldots,
$Z^{(\ell+1)}\in\mat{\field}{n_{\ell+1}}$ in appropriate domains,
we have that
\begin{multline*}\Delta^w\mathfrak{R}\left(Z^{(1)},\ldots,Z^{(\ell+1)}\right)\in
\rmat{\field}{p}{q}\otimes\mat{\field}{n_1}\otimes\cdots\otimes\mat{\field}{n_{\ell+1}}\\
\cong\rmat{\left(\mat{\field}{n_1}\otimes\cdots\otimes\mat{\field}{n_{\ell+1}}\right)}{p}{q},
\end{multline*}
and for
$(H_1,\ldots,H_\ell)\in\rmat{\field}{n_1}{n_2}\times\cdots\times\rmat{\field}{n_{\ell}}{n_{\ell+1}}$,
we have that
$$
\Delta^w\mathfrak{R}\left(Z^{(1)},\ldots,Z^{(\ell+1)}\right)(H_1,\ldots,H_\ell)\in
\rmat{\left(\rmat{\field}{n_1}{n_{\ell+1}}\right)}{p}{q}.$$

In particular, we have
$$\sum_{i=1}^d\sum_{j=1}^d\Delta_{i}\Delta_j\mathfrak{R}(Z,Z,Z)(W_i,W_j)=
\frac{d^2}{dt^2}\mathfrak{R}(Z+tW)\Big|_{t=0}.$$ This is exactly
the Hessian of $\mathfrak{R}$ which plays a central role in the
study of noncommutative convexity; see, e.g.,
\cite{H03,HMcCV,HMcCPV}.  Hessians and directional derivatives of
matrix-valued noncommutative rational expressions were implemented
in NCAlgebra to produce a convexity checking algorithm
\cite{CHSY}.

\begin{rem}\label{rem:rdifell}
 We can also define the difference-differential
operators on matrices over tensor powers of $\skfield$ at the
level of matrix-valued noncommutative rational expressions in
several tuples of indeterminates. For a $p\times q$ matrix-valued
noncommutative rational expression $R$ in $\ell$ tuples of
indeterminates (see Remark \ref{rem:ncexprell}), we define
$\Delta_j(R)$, a $p\times q$ matrix-valued noncommutative rational
expression in $\ell+1$ tuples of indeterminates, analogously to
Definition \ref{dfn:rdif}, except that in rule (1) we consider
``pure tensors" instead of polynomials, and we modify rules (3)
and (4), cf. \eqref{eq:Leibniz-ell} and \eqref{eq:inv-dif-ell}.
Namely,
\begin{enumerate}
    \item[(1)] If $R$ and $R'$ are matrix-valued noncommutative rational
expressions in $t$ tuples and in $s$ tuples of indeterminates
respectively, with $t+s=\ell$, we set
$$\Delta_j(R\otimes R')=R\otimes \Delta_j(R').$$
    \item[(3)] If $R_1$ and $R_2$ are matrix-valued noncommutative
    rational expressions of compatible sizes in $\ell$ tuples of intederminates, then
    $$\Delta_j(R_1R_2)=\Delta_j(R_1)\iota_\ell(R_2)+(R_1\otimes
    1)\Delta_j(R_2).$$
    \item[(4)] If $R$ is a square matrix-valued noncommutative
    rational expression in $\ell$ tuples of indeterminates, which
    is not identically singular, then
    $$\Delta_j(R^{-1})=-(R^{-1}\otimes
    1)\Delta_j(R)\iota_\ell(R^{-1}).$$
    \end{enumerate}
Here, for a matrix-valued noncommutative rational expression $R$
in $\ell$ tuples of intederminates, $\iota_\ell(R)$ is a
matrix-valued noncommutative rational expression of the same size
in $\ell+1$ tuples of intederminates defined by
$\iota_1(R)=1\otimes R$ and by the recursive relations
\begin{enumerate}
\item $\iota_{t+s}(R\otimes R')=R\otimes \iota_{s}(R')$; \item
$\iota_\ell(R_1+R_2)=\iota_\ell(R_1)+\iota_\ell(R_2)$; \item
$\iota_\ell(R_1R_2)=\iota_\ell(R_1)\iota_\ell(R_2)$; \item
$\iota_\ell(R^{-1})=(\iota_\ell(R))^{-1}$; \item
$\iota_\ell([R_{ab}])=[\iota_\ell(R_{ab})]$
\end{enumerate}
for $\ell>1$. Notice that for $d$-tuples of matrices
$Z^j\in\mattuple{\field}{n_j}{d}$, $j=1$, \ldots, $\ell+1$, the
value $\iota_\ell(R)(Z^{(1)},\ldots,Z^{(\ell+1)})$ is the image of
the value $R(Z^{(1)},\ldots,Z^{(\ell-1)},Z^{(\ell+1)})$ under the
linear mapping
\begin{multline*}
\mat{\field}{n_1}\otimes
\cdots\otimes\mat{\field}{n_{\ell-1}}\otimes\mat{\field}{n_{\ell+1}}\\
\hfill\longrightarrow \mat{\field}{n_1}\otimes
\cdots\otimes\mat{\field}{n_{\ell-1}}\otimes\mat{\field}{n_\ell}\otimes\mat{\field}{n_{\ell+1}},\\
\hfill A^{(1)}\otimes\cdots\otimes A^{(\ell)}\longmapsto
A^{(1)}\otimes\cdots\otimes A^{(\ell-1)}\otimes I_{n_\ell}\otimes
A^{(\ell)}\hfill
\end{multline*}
extended naturally to the mapping of matrices
\begin{multline*}
\rmat{\left(\mat{\field}{n_1}\otimes
\cdots\otimes\mat{\field}{n_{\ell-1}}\otimes\mat{\field}{n_{\ell+1}}\right)}{p}{q}\\
\hfill\longrightarrow \rmat{\left(\mat{\field}{n_1}\otimes
\cdots\otimes\mat{\field}{n_{\ell-1}}\otimes\mat{\field}{n_\ell}\otimes\mat{\field}{n_{\ell+1}}\right)}{p}{q}.
\end{multline*}
 We then have an
analogue of Theorem \ref{thm:rtriangle} as follows. Let $R$ be a
$p\times q$ matrix-valued noncommutative rational expression in
$\ell$ tuples of indeterminates, let
$Z^{(j)}\in\mattuple{\field}{n_j}{d}$, $j=1$, \ldots, $\ell-1$,
$Z\in\mattuple{\field}{n}{d}$, $Z'\in\mattuple{\field}{n'}{d}$, so
that
$$(Z^{(1)},\ldots,Z^{(\ell-1)},Z),\
(Z^{(1)},\ldots,Z^{(\ell-1)},Z')\in\dom R,$$ and let
$W\in\rmattuple{\field}{n}{n'}{d}$. Then
$$\left(Z^{(1)},\ldots,Z^{(\ell-1)},\begin{bmatrix}
Z & W\\
0 & Z'
\end{bmatrix}\right)\in\dom R$$
and
\begin{multline}\label{eq:rtriangle-ell}
P(n+n',pn_1\cdots
n_{\ell-1})R\left(Z^{(1)},\ldots,Z^{(\ell-1)},\begin{bmatrix}
Z & W\\
0 & Z'
\end{bmatrix}\right)P(n+n',qn_1\cdots
n_{\ell-1})^{\mtrans}\\
=\left[\begin{matrix} P(n,pn_1\cdots
n_{\ell-1})R(Z^{(1)},\ldots,Z^{(\ell-1)},Z)P(n,qn_1\cdots
n_{\ell-1})^{\mtrans}\\
0\end{matrix}\right.\hfill
\\
\left.\begin{matrix} P(n,pn_1\cdots
n_{\ell-1})\sum\limits_{j=1}^d\Delta_j(R)(Z^{(1)},\ldots,Z^{(\ell-1)},Z,Z')(W_j)P(n',qn_1\cdots
n_{\ell-1})^{\mtrans}\\
P(n',pn_1\cdots
n_{\ell-1})R(Z^{(1)},\ldots,Z^{(\ell-1)},Z')P(n',qn_1\cdots
n_{\ell-1})^{\mtrans}
\end{matrix}\right].
\end{multline}
Here
\begin{multline*}
\Delta_j(R)(Z^{(1)},\ldots,Z^{(\ell-1)},Z,Z')\in\rmat{\field}{p}{q}\otimes\mat{\field}{n_1}\otimes
\cdots\otimes\mat{\field}{n_{\ell-1}}\otimes\mat{\field}{n}\otimes\mat{\field}{n'}\\
\cong\rmat{\left(\mat{\field}{n_1}\otimes
\cdots\otimes\mat{\field}{n_{\ell-1}}\otimes\mat{\field}{n}\otimes\mat{\field}{n'}
\right)}{p}{q}
\end{multline*}
and
\begin{multline*}
\Delta_j(R)(Z^{(1)},\ldots,Z^{(\ell-1)},Z,Z')(W_j)\in
\rmat{\left(\mat{\field}{n_1}\otimes
\cdots\otimes\mat{\field}{n_{\ell-1}}\otimes\rmat{\field}{n}{n'}
\right)}{p}{q},
\end{multline*}
cf. the discussions preceding Theorem \ref{thm:rtriangle} and
following Example \ref{ex:realiz_dif}. The proof is analogous to
the proof of Theorem \ref{thm:rtriangle} except that instead of
establishing \eqref{eq:rtriangle} for polynomials we have to
establish \eqref{eq:rtriangle-ell} for pure tensors. Namely, we
have to show that if $R$ and $R'$ satisfy \eqref{eq:rtriangle-ell}
then so does $R\otimes R'$. This can be achieved using the
identity
\begin{multline*}
R(Z^{(1)},\ldots,Z^{(\ell-1)})\otimes R'\left(\begin{bmatrix}
Z & W\\
0 & Z'
\end{bmatrix}\right)\\
=\left(R(Z^{(1)},\ldots,Z^{(\ell-1)})\otimes
I\right)\left(I\otimes R'\left(\begin{bmatrix}
Z & W\\
0 & Z'
\end{bmatrix}\right)\right).
\end{multline*}
It follows from \eqref{eq:rtriangle-ell} that $\Delta_j$ preserves
the equivalence of matrix-valued noncommutative rational
expressions in $\ell$ tuples of indeterminates and can be thus
defined on matrices over $\skfield^{\otimes\ell}$. It also follows
that if $$(Z^{(1)},\ldots,Z^{(\ell-1)},Z),\
(Z^{(1)},\ldots,Z^{(\ell-1)},Z')\in\edom R,$$ then
$$\left(Z^{(1)},\ldots,Z^{(\ell-1)},Z,Z'\right)
\in\edom \Delta_j(R).$$
\end{rem}
\begin{rem}\label{rem:rdif-higher}
As a special case of the previous remark, we see that for a
matrix-valued noncommutative rational function $\mathfrak{R}$ and
for a word $w$ of length $\ell$, $\Delta^w\mathfrak{R}$ is the
matrix over $\skfield^{\otimes\ell}$ corresponding to the
equivalence class of $\Delta^w(R)$ for any $R\in\mathfrak{R}$. We
further conclude that $$\dom
\Delta^w\mathfrak{R}\supseteq(\dom\mathfrak{R})^{\ell+1},\quad
\edom \Delta^w\mathfrak{R}\supseteq(\edom\mathfrak{R})^{\ell+1}.$$
The second inclusion follows from the last statement of Remark
\ref{rem:rdifell}. The first inclusion follows from the equality
$\dom \Delta^wR=(\dom R)^{\ell+1}$ for a matrix-valued rational
expression $R$, which can be proved recursively. We leave the
details to the reader, noticing only that for the product we have
the following higher order Leibniz rule:
\begin{equation}\label{eq:Leibniz}
\Delta^w(R_1R_2)=\sum_{u,v\in\free_d\colon
w=uv}(\Delta^v(R_1)\otimes\underset{\ell-|v|\ {\rm
times}}{\underbrace{1\otimes\cdots\otimes
1}})\cdot(\underset{\ell-|u|\ {\rm
times}}{\underbrace{1\otimes\cdots\otimes
1}}\otimes\Delta^u(R_2)).
\end{equation}
\end{rem}

\subsection{Formal power series.}\label{s:FPS}
A matrix-valued noncommutative rational expression which is
regular at zero determines a noncommutative formal power series
with matrix coefficients. This correspondence is defined
recursively by inverting formal power series with invertible
constant term (the coefficient for $z^{\emptyword}$); see, e.g.,
\cite{BR}. Furthermore, $R_1$ and $R_2$ are equivalent if and only
if the corresponding formal power series coincide, so that the
noncommutative formal power series expansion of a matrix-valued
noncommutative rational function which is regular at zero is well
defined; see \cite[Remark 2.14]{KVV}. If
$\sum_{w\in\free_d}\mathfrak{R}_wz^w$ is the formal power series
expansion of $\mathfrak{R}$ then the formal power series expansion
of $\Delta_j\mathfrak{R}$ is given by
$$\sum_{u,v\in\free_d}\mathfrak{R}_{ug_jv}z^u\otimes z^{\prime v}.$$
For a proof, we use the recognizable series realization and
represent $\mathfrak{R}$ by a matrix-valued noncommutative
rational expression of the form \eqref{eq:rsr}. Therefore,
$\mathfrak{R}_w=CA^wB$. On the other hand, it follows from Example
\ref{ex:realiz} that the formal power series expansion of
$\Delta_j\mathfrak{R}$ is given by
$$\sum_{u,v\in\free_d}CA^uA_jA^vBz^u\otimes z^{\prime v}.$$ So, we see that the
coefficient for $z^u\otimes z^{\prime v}$ is exactly
$\mathfrak{R}_{ug_jv}$.

A similar argument using Example \ref{ex:realiz_dif} shows that
the formal power series expansion of $\Delta^w\mathfrak{R}$ for
$w=g_{i_\ell}\cdots g_{i_1}$ is given by
$$\sum_{u_1,\ldots,u_{\ell+1}\in\free_d}\mathfrak{R}_{u_1g_{i_1}u_2g_{i_2}\cdots
g_{i_\ell}u_{\ell+1}} (z^{(1)})^{u_1}\otimes
(z^{(2)})^{u_2}\otimes\cdots\otimes (z^{(\ell+1)})^{u_{\ell+1}}.$$
In particular, looking at the constant term of this expansion
(i.e., for $u_1=\ldots=u_{\ell+1}=\emptyset$), we see that
$$\Delta^w\mathfrak{R}(0,\ldots,0)=\mathfrak{R}_{w^{\mtrans}}.$$

\section{Conclusions}\label{s:conc}

Rational functions in noncommuting indeterminates occur in many
areas of system theory: most control problems involve rational
expressions in matrix parameters. In this paper we surveyed some
aspects of the theory of noncommutative rational functions, and
provided some pointers to a rapidly growing literature. We
discussed in some details a construction of the skew field of
noncommutative rational functions based on noncommutative rational
expressions and their matrix evaluations. We explained its role as
the universal field of fractions of the ring of noncommutative
polynomials. We gave an outline of a noncommutative realization
theory. Finally, we developed a difference-differential calculus
for noncommutative rational functions.

\section*{Aknowledgements}

It is our pleasure to thank two anonymous referees and Amnon
Yekutieli for useful comments and suggestions. We are also
grateful to Igor Klep for the \TeX macros used in the notation for
the free skew field.

\end{document}